\newcommand{\ZZ}{\mathbb{Z}}
\newcommand{\RR}{\mathbb{R}}
\newcommand{\CC}{\mathbb{C}}
\newcommand{\NN}{\mathbb{N}}
\newcommand{\HH}{\mathbb{H}}
\newcommand{\FF}{\mathbb{F}}
\newcommand{\PP}{\mathbb{P}}
\newcommand{\cM}{\mathcal{M}}
\newcommand{\cS}{\mathcal{S}}
\newcommand{\cO}{\mathcal{O}}
\newcommand{\cQM}{\mathcal{QM}}
\newcommand{\idx}[1]{\index{#1}{\em #1}}
\renewcommand{\hat}{\widehat}
\DeclareMathOperator{\Aut}{Aut}
\DeclareMathOperator{\im}{\mathfrak{Im}}
\begin{document}

\title{Introduction to Modular Forms}
\author{Simon C. F. Rose}
\institute{Field's Institute, Canada}
%\email{srose@fields.utoronto.ca}

\maketitle

\abstract{We introduce the notion of modular forms, focusing primarily on the group \(PSL_2\ZZ\). We further introduce quasi-modular forms, as well as discuss their relation to physics and their applications in a variety of enumerative problems. These notes are based on a lecture given at the Field's Institute during the thematic program on Calabi-Yau Varieties: Arithmetic, Geometry, and Physics.}

\section{Introduction}

The goal of this chapter is to introduce a particular class of functions called {\em modular forms}. It should be remarked that there are many ways of looking at these functions; for the purpose of these notes we will focus on considering them as a certain type of generating function with particularly interesting coefficients.

That said, this is a rather myopic view. The theory of modular forms is much richer and more interesting than that, ranging widely through the fields of algebra, analysis, number theory, and geometry. A good reference to read further and to learn more would be the book \cite{diamond2005a}.

\section{Basic definitions}

We begin with the following setup. We first note that the group \(PSL_2\ZZ\) acts naturally on the upper-half plane \(\HH = \{\tau \in \CC \mid \im \tau > 0\}\) via the action
\[
\begin{pmatrix} a & b \\ c & d \end{pmatrix} \cdot \tau = \frac{a\tau + b}{c\tau + d}.
\]

\begin{remark}
It should be noted that this is simply a restriction of the usual action of \(PSL_2\CC\) on \(\PP^1\) to the subgroup of integer matrices with determinant 1. Since this preserves the real line, and due to the condition on the determinant, it also acts on \(\HH\). It is then easy to check that these two actions are the same.
\end{remark}

\begin{definition}\label{mod_def}
Let \(\Gamma\) be a finite index subgroup of \(PSL_2\ZZ\) (which will satisfy some conditions which will be discussed later). Then we say that a holomorphic function \(f : \HH \to \CC\) is a \idx{modular form} of weight \(k\) for the group \(\Gamma\) if it satisfies the transformation law
\[
f(\gamma \tau) = (c\tau + d)^k f(\tau)
\]
for all \(\gamma = \begin{pmatrix} a & b \\ c & d \end{pmatrix} \in \Gamma\), and if it is holomorphic at infinity; that is, if \(\lim_{\tau \to i\infty}f(\tau)\) is finite.
\end{definition}

A few remarks are in order.

\begin{remark}
We first note that if \(\Gamma = PSL_2\ZZ\), then the only modular form of odd weight is \(f(\tau) = 0\). More generally, if the matrix
\[
\begin{pmatrix}-1 & 0 \\ 0 & -1\end{pmatrix}
\]
is an element of the group \(\Gamma\), then this is true. This is since:
\[
f(\tau) = f(\gamma \tau) = (-1)^k f(\tau)
\]
must be true for all \(\tau \in \HH\), which can only hold if \(k\) is even.
\end{remark}

\begin{remark}
As written, this definition might seem unmotivated. It says that modular forms are certain functions are those which behave in a particular way under a certain group action, which is not particularly enlightening.

A more geometric way to regard these is as follows: modular forms are actually sections of line bundles over the stack quotient \([\HH/\Gamma]\). Consider the action of \(\Gamma\) on \(\HH \times \CC\) given by
\[
\begin{pmatrix} a & b \\ c & d \end{pmatrix} \cdot (\tau, z) = \Big(\frac{a\tau + b}{c\tau + d}, (c \tau + d)^k z\Big).
\]
Then there is a \(\Gamma\)-equivariant projection \(\HH \times \CC \to \HH\). This lets us regard the stack \([\HH \times \CC / \Gamma]\) as a line bundle over the stack \([\HH / \Gamma]\) as claimed.
\end{remark}

So far, the only examples of modular forms that we have are constant functions \(f(\tau) = C\). Fortunately, there are many more modular forms than just these functions.

\begin{definition}
Let \(k\) be a positive integer. Define the \idx{Eisenstein series} of weight \(2k\) to be the series
\[
G_{2k}(\tau) = \sum_{(m, n) \neq (0, 0)} \frac{1}{(m\tau + n)^{2k}}.
\]
\end{definition}

We begin by proving that this is modular of weight \(2k\).

\begin{theorem}
For \(k > 1\), the function \(G_{2k}(\tau)\) is modular of weight \(2k\).
\end{theorem}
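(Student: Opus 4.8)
The plan is to establish three things: that the series defining \(G_{2k}\) converges absolutely and locally uniformly on \(\HH\), so that \(G_{2k}\) is holomorphic there; that it satisfies the transformation law of Definition \ref{mod_def}; and that it extends holomorphically over the cusp at infinity. The transformation law is the conceptual point of the argument, but the hypothesis \(k > 1\) enters only through the convergence estimate, and that estimate is where I expect the real work to lie.

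First I would prove convergence. Fix a compact set \(K \subset \HH\). The key point is that there is a constant \(\delta = \delta(K) > 0\) with \(|m\tau + n|^2 \ge \delta(m^2 + n^2)\) for all \(\tau \in K\) and all \((m,n) \in \RR^2\): the function \((\tau, (x,y)) \mapsto |x\tau + y|^2\) is continuous and, since \(\tau \in \HH\), strictly positive on the compact set \(K \times S^1\), hence bounded below there by some \(\delta > 0\), and the general case follows by homogeneity. Granting this, on \(K\) the series \(\sum_{(m,n) \neq (0,0)} |m\tau + n|^{-2k}\) is dominated termwise by \(\delta^{-k} \sum_{(m,n) \neq (0,0)} (m^2 + n^2)^{-k}\). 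Grouping the nonzero lattice points according to the value of \(N = \max(|m|,|n|)\)---of which there are exactly \(8N\) for each \(N \ge 1\), each with \(m^2 + n^2 \ge N^2\)---bounds the latter sum by \(8 \sum_{N \ge 1} N^{1 - 2k}\), which converges precisely when \(k > 1\). Thus \(G_{2k}\) is a locally uniform limit of holomorphic functions and so is holomorphic on \(\HH\); absolute convergence also licenses every rearrangement used below.

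Next I would verify the transformation law. For \(\gamma = \begin{pmatrix} a & b \\ c & d \end{pmatrix} \in PSL_2\ZZ\) a direct substitution gives
\[
G_{2k}(\gamma\tau) = \sum_{(m,n) \neq (0,0)} \frac{1}{\big(m\tfrac{a\tau+b}{c\tau+d} + n\big)^{2k}} = (c\tau+d)^{2k} \sum_{(m,n) \neq (0,0)} \frac{1}{\big((ma+nc)\tau + (mb+nd)\big)^{2k}}.
\]
Writing \((m',n') = (m,n)\gamma\), the map \((m,n) \mapsto (m',n')\) is a bijection of \(\ZZ^2 \setminus \{(0,0)\}\) onto itself, because \(\gamma\) has an integral inverse of determinant \(1\). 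Reindexing the absolutely convergent series by \((m',n')\) turns the right-hand side into \((c\tau+d)^{2k} G_{2k}(\tau)\), which is the desired identity.

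Finally, for holomorphy at infinity I would separate off the terms with \(m = 0\), which contribute \(\sum_{n \neq 0} n^{-2k} = 2\zeta(2k)\), independent of \(\tau\). Each remaining term has \(m \neq 0\) and tends to \(0\) as \(\im\tau \to \infty\), and local uniform convergence permits interchanging this limit with the sum; hence \(\lim_{\tau \to i\infty} G_{2k}(\tau) = 2\zeta(2k)\), which is finite. Together with the transformation law and holomorphy on \(\HH\), this shows that \(G_{2k}\) is a modular form of weight \(2k\), completing the proof.
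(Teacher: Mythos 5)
Your proof is correct, but it follows a genuinely different route from the paper's. The paper verifies the transformation law only on the two generators \(\tau \mapsto \tau+1\) and \(\tau \mapsto -1/\tau\) of \(PSL_2\ZZ\) (a fact it asserts without proof), and it relegates the role of the hypothesis \(k>1\) to a remark after the proof, namely that the reindexing in the \(S\)-transformation is a genuine reordering of a double sum and so needs absolute convergence; it does not address convergence or the cusp condition inside the proof at all (the behaviour at \(i\infty\) is only settled later by the Fourier expansion in Proposition~\ref{prop_fourier}). You instead prove the transformation law for an arbitrary \(\gamma\) at once, via the observation that \((m,n)\mapsto(m,n)\gamma\) is a bijection of \(\ZZ^2\setminus\{(0,0)\}\), and you supply the convergence estimate \(|m\tau+n|^2\ge\delta(m^2+n^2)\) and the limit at the cusp explicitly. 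The paper's generator argument is shorter and makes vivid exactly where \(k>1\) is used (only the \(S\)-transformation mixes the two indices); your unimodular-reindexing argument avoids any appeal to the generation of \(PSL_2\ZZ\) by \(S\) and \(T\), works verbatim for any finite index subgroup, and actually completes the definition of modularity by checking holomorphy on \(\HH\) and at infinity. One small refinement: to interchange \(\lim_{\tau\to i\infty}\) with the sum you need uniform convergence on a neighbourhood of the cusp, say on \(\{\im\tau\ge 1,\ |\re\tau|\le\tfrac12\}\) (enough by the already-established periodicity), not merely local uniform convergence on compact subsets of \(\HH\); your compactness-plus-homogeneity argument gives exactly such a uniform \(\delta\) on that region, so the fix is immediate, but as stated the justification is slightly too weak.
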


\begin{proof}
We begin by noting that it suffices to check the transformation law for the generators of the group \(\Gamma\). For the case of \(PSL_2\ZZ\), this group is generated by the matrices
\[
\begin{pmatrix}1 & 1 \\ 0 & 1 \end{pmatrix} \qquad\text{and}\qquad \begin{pmatrix}0 & -1 \\ 1 & 0 \end{pmatrix},
\]
or equivalently the transformations
\[
\tau \mapsto \tau + 1 \qquad\text{and}\qquad \tau \mapsto -\frac{1}{\tau}.
\]

The first of these is easy to see:
\begin{align*}
G_{2k}(\tau + 1) &= \sum_{(m, n) \neq (0, 0)} \frac{1}{\big(m(\tau+1) + n\big)^{2k}}\\
 &= \sum_{(m, n) \neq (0, 0)} \frac{1}{\big(m\tau + (n+m)\big)^{2k}},
\end{align*}
which is clearly the same as \(G_{2k}(\tau)\).

For the second, we have that
\begin{align*}
G_{2k}(-1/\tau) &= \sum_{(m, n) \neq (0, 0)} \frac{1}{(-m/\tau + n)^{2k}} \\
&= \sum_{(m, n) \neq (0, 0)} \tau^{2k}\frac{1}{(-m + n\tau)^{2k}} \\
&= \tau^{2k} G_{2k}(\tau)
\end{align*}
as claimed.
\end{proof}

\begin{remark}
We should note that it is in that last step that we used the fact that \(k > 1\). Strictly speaking we should write\footnote{Note that in this double sum we should exclude the \((m,n) = (0, 0)\) term}
\[
G_{2k}(\tau) = \sum_{m=-\infty}^\infty\sum_{n=-\infty}^\infty \frac{1}{(m\tau + n)^{2k}}
\]
and so the operations done to compare \(G_{2k}(-1/\tau)\) and \(t^{2k}G_{2k}(\tau)\) require us to reorder the summation. This can only be done if the sum is absolutely convergent, which only occurs if \(k > 1\). In fact, we will show later that \(G_2(\tau)\) is {\em not} modular, but that it satisfies a modified transformation law which makes it {\em quasi-modular}.
\end{remark}

Since all of the functions \(G_{2k}\) are invariant under the operation \(\tau \mapsto \tau + 1\), it follows that we can expand them as a series in \(q = e^{2\pi i \tau}\). The resulting Fourier coefficients are often very nice arithmetic functions, and it is from this perspective that we can consider modular forms to be a particularly nice class of generating functions.

We will begin by computing the Fourier expansion of the Eisenstein series. It should be noted that this is valid for \(k \geq 1\), and not just for \(k > 1\).

\begin{proposition}\label{prop_fourier}
The Fourier expansion of \(G_{2k}(\tau)\) is given by
\[
G_{2k}(\tau) = 2\zeta(2k) + \frac{2(-1)^k(2\pi)^{2k}}{(2k-1)!}\sum_{d=1}^\infty \Big(\sum_{k \mid d} k^{2k-1}\Big) q^d.
\]
\end{proposition}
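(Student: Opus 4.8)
The plan is to compute the Fourier expansion by the classical route through the cotangent partial-fraction identity, differentiated repeatedly. First I would split the double sum $G_{2k}(\tau) = \sum_{(m,n)\neq(0,0)} (m\tau+n)^{-2k}$ into the $m=0$ terms and the $m\neq 0$ terms. The $m=0$ part contributes $\sum_{n\neq 0} n^{-2k} = 2\zeta(2k)$, which accounts for the constant term. The remaining part is $\sum_{m\neq 0}\sum_{n\in\ZZ}(m\tau+n)^{-2k}$, and by symmetry $m\mapsto -m$ (since $2k$ is even) this equals $2\sum_{m=1}^\infty \sum_{n\in\ZZ}(m\tau+n)^{-2k}$.

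\emph{Key identity.} The engine of the computation is the Lipschitz-type formula obtained by differentiating the partial-fraction expansion of the cotangent. Starting from
\[
\pi\cot(\pi z) = \frac{1}{z} + \sum_{n=1}^\infty\Big(\frac{1}{z+n} + \frac{1}{z-n}\Big) = \pi i - 2\pi i\sum_{r=0}^\infty q_z^{\,r},\qquad q_z = e^{2\pi i z},
\]
valid for $\im z > 0$, I would differentiate both sides $2k-1$ times with respect to $z$. The left side gives $\sum_{n\in\ZZ}(z+n)^{-2k}\cdot(-1)^{2k-1}(2k-1)!$, i.e. $-(2k-1)!\sum_{n\in\ZZ}(z+n)^{-2k}$; differentiating the right side termwise gives $-2\pi i\sum_{r=1}^\infty (2\pi i r)^{2k-1} q_z^{\,r}$. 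Solving,
\[
\sum_{n\in\ZZ}\frac{1}{(z+n)^{2k}} = \frac{(-1)^k(2\pi)^{2k}}{(2k-1)!}\sum_{r=1}^\infty r^{2k-1} q_z^{\,r},
\]
where I have used $(2\pi i)^{2k} = (-1)^k(2\pi)^{2k}$ after absorbing the leading factor.

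\emph{Assembling.} Now substitute $z = m\tau$ (so $q_z = q^m$ with $q = e^{2\pi i\tau}$) into this identity and plug it into the $m\neq 0$ part of $G_{2k}$:
\[
G_{2k}(\tau) = 2\zeta(2k) + 2\cdot\frac{(-1)^k(2\pi)^{2k}}{(2k-1)!}\sum_{m=1}^\infty\sum_{r=1}^\infty r^{2k-1} q^{mr}.
\]
Collecting the coefficient of $q^d$ means summing $r^{2k-1}$ over all factorizations $d = mr$, i.e. over divisors $r\mid d$, which yields exactly $\sum_{r\mid d} r^{2k-1}$ and matches the stated formula (the inner summation index being named $k$ in the proposition is an unfortunate clash, but the content is $\sigma_{2k-1}(d)$).

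\emph{Main obstacle.} The genuinely delicate point is justifying the interchange of summations and the termwise differentiation of the cotangent series: the double series $\sum_{m,n}$ is only conditionally convergent when $k=1$, so for the claimed validity at $k\geq 1$ one cannot simply invoke absolute convergence. I would handle this by working with the already-symmetrized, absolutely convergent inner sum $\sum_{n\in\ZZ}(m\tau+n)^{-2k}$ (absolutely convergent for each fixed $m$ as soon as $2k\geq 2$) and summing over $m\geq 1$ afterwards; the differentiated cotangent series converges uniformly on compact subsets of $\HH$, so termwise differentiation is legitimate there. The remaining care is purely in bookkeeping the constant $(-1)^k(2\pi)^{2k}/(2k-1)!$ through the $2k-1$ derivatives.
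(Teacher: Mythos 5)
Your proof is correct and follows essentially the same route as the paper: split off the $m=0$ terms to get $2\zeta(2k)$, differentiate the cotangent partial-fraction identity $2k-1$ times to obtain the Lipschitz formula, substitute $z=m\tau$, and collect the divisor sums. The only difference is cosmetic (the paper packages the identity as an auxiliary function $f(\tau)$ and, incidentally, misprints $\cot$ as $\arctan$), and your remark on handling the conditionally convergent case $k=1$ via the symmetrized inner sum is a welcome extra precision.
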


\begin{proof}
We begin by considering the auxiliary function
\[
f(\tau) = \frac{1}{\tau} + \sum_{n=1}^\infty \Big(\frac{1}{\tau + n} + \frac{1}{\tau - n}\Big).
\]
It is not hard to show (by considering its poles and residues) that we have the equality
\begin{equation}\label{eq_atan}
f(\tau) = \pi  \arctan(\pi \tau) = \pi i \frac{e^{2\pi i \tau} + 1}{e^{2\pi i \tau} - 1}.
\end{equation}

We now compute its \((2k-1)\)-st derivative to find that
\[
f^{(2k-1)}(\tau) = -(2k-1)!\sum_{n \in \ZZ} \frac{1}{(\tau + n)^{2k}}.
\]
Moreover, we have from the definition of \(G_{2k}(\tau)\) that
\[
G_{2k}(\tau) = 2\zeta(2k) - \frac{2}{(2k-1)!}\sum_{m=1}^\infty f^{(2k-1)}(m\tau).
\]
We should note here that it is important that \(2k\) be even in this case so that
\[
\frac{1}{(m\tau + n)^{2k}} = \frac{1}{\big((-m)\tau + (-n)\big)^{2k}}
\]
so that we obtain all of the terms in \(G_{2k}(\tau)\) from this method.

Now, by \eqref{eq_atan}, we have that 
\[
f(\tau) = -\pi i(1 + 2q + 2q^2 + 2q^3 + \cdots)
\]
where \(q = e^{2\pi i \tau}\). Moreover, as usual we have that
\[
\frac{d}{d\tau} = 2\pi i q \frac{d}{dq}
\]
and so it follows that
\begin{align*}
f^{(2k - 1)}(\tau) &= -2\pi i (2\pi i)^{2k-1} \sum_{d=1}^\infty d^{2k-1}q^d \\
 &= -(-1)^k (2\pi)^{2k} \sum_{d=1}^\infty d^{2k-1}q^d.
\end{align*}
Thus we conclude that
\begin{align*}
G_{2k}(\tau) &= 2\zeta(2k) + \frac{2(-1)^k(2\pi)^{2k}}{(2k-1)!}\sum_{m=1}^\infty\sum_{d=1}^\infty d^{2k-1}q^{md} \\
&= 2\zeta(2k) + \frac{2(-1)^k(2\pi)^{2k}}{(2k-1)!}\sum_{d=1}^\infty \Big(\sum_{k \mid d} k^{2k-1}\Big) q^d
\end{align*}
as claimed.
\end{proof}

\begin{remark}
We should also note that there are a variety of normalizations that are used for Eisenstein series. As we saw in Proposition \ref{prop_fourier}, \(G_{2k}(\tau)\) is given by
\[
G_{2k}(\tau) = 2\zeta(2k) + \mathcal{O}(\tau).
\]
We will often consider the alternative normalization
\[
E_{2k}(\tau) = \frac{G_{2k}(\tau)}{2\zeta(2k)} = 1 + \mathcal{O}(\tau).
\]
In particular, we have that
\begin{gather*}
E_2(\tau) = 1 - 24 \sum_{d=1}^\infty \sigma_1(d) q^d,\\
E_4(\tau) = 1 + 240 \sum_{d=1}^\infty \sigma_3(d) q^d, \\
E_6(\tau) = 1 - 504 \sum_{d=1}^\infty \sigma_5(d) q^d
\end{gather*}
where \(\sigma_r(d) = \sum_{k \mid d} k^{r-1}\).
\end{remark}

Another modular form of particular interest comes from noting that \(E_4(\tau)^3\) and \(E_6(\tau)^2\) are both modular of weight 12. Since they both have the same constant term, it follows that the form
\[
\Delta(\tau) = \frac{E_4(\tau)^3 - E_6(\tau)^2}{1728} = q + \cO(q^2)
\]
is a {\em cusp form}. Due to its special role which we will see later, it gets a special name.

\begin{definition}
We define the \idx{modular discriminant} to be the function
\[
\Delta(\tau) = \frac{E_4(\tau)^3 - E_6(\tau)^2}{1728}.
\]
\end{definition}

Its Fourier expansion reads
\[
\Delta(\tau) = q - 24q^2 + 252q^3 - 1472q^4 + 4830q^5 - 6048q^6 - 16744q^7 + \cdots,
\]
and it turns out that it can be expressed as
\[
\Delta(\tau) = q\prod_{k=1}^\infty(1 - q^k)^{24} = \eta(\tau)^{24}
\]
where \(\eta(\tau) = q^{1/24}\prod_{k=1}^\infty (1 - q^k)\)

\begin{definition}
A \idx{cusp form} is a modular form whose Fourier expansion at each of the cusps has no constant term.
\end{definition}

Here are some basic properties of modular forms.

\begin{definition}
Define the vector space \(\cM_k(\Gamma)\) to be
\[
\cM_k(\Gamma) = \{f : \HH \to \CC \mid f \text{ is modular of weight } k\},
\]
and also define
\[
\cM_*(\Gamma) = \bigoplus_{k=0}^\infty \cM_k(\Gamma).
\]
Finally, we define the space of cusp forms to be
\[
\cS_k(\Gamma) = \{f : \HH \to \CC \mid f \text{ is a cusp form of weight } k\}.
\]
\end{definition}

\begin{proposition}
The vector space \(\cM_*(\Gamma)\) is a graded \(\CC\)-algebra.
\end{proposition}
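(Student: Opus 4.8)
The plan is to observe that $\cM_*(\Gamma)$ is, up to the directness of the sum, nothing more than a subring of the ring of holomorphic functions on $\HH$: once one knows that the product of a weight-$k$ form and a weight-$\ell$ form is again a form, of weight $k+\ell$, all the ring axioms are inherited from pointwise multiplication of functions, and the grading is exactly the decomposition by weight. So the proof splits into (i) checking each $\cM_k(\Gamma)$ is a $\CC$-vector space, (ii) checking weight-additivity of multiplication, and (iii) assembling the algebra structure.

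First I would record step (i), which is essentially immediate from the definition: if $f,g \in \cM_k(\Gamma)$ and $\lambda,\mu \in \CC$, then $\lambda f + \mu g$ is holomorphic on $\HH$, the transformation law $h(\gamma\tau) = (c\tau+d)^k h(\tau)$ is linear in $h$ and so passes to $\lambda f+\mu g$, and $\lim_{\tau\to i\infty}(\lambda f+\mu g)(\tau) = \lambda\lim_{\tau\to i\infty}f(\tau) + \mu\lim_{\tau\to i\infty}g(\tau)$ is finite; hence $\lambda f+\mu g \in \cM_k(\Gamma)$.

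Step (ii) is the heart of the matter: I claim $\cM_k(\Gamma)\cdot\cM_\ell(\Gamma) \subseteq \cM_{k+\ell}(\Gamma)$. Given $f\in\cM_k(\Gamma)$ and $g\in\cM_\ell(\Gamma)$, the product $fg$ is holomorphic on $\HH$, and for $\gamma = \begin{pmatrix} a & b \\ c & d\end{pmatrix}\in\Gamma$ one has
\[
(fg)(\gamma\tau) = f(\gamma\tau)\,g(\gamma\tau) = (c\tau+d)^k f(\tau)\cdot (c\tau+d)^\ell g(\tau) = (c\tau+d)^{k+\ell}(fg)(\tau),
\]
which is the weight-$(k+\ell)$ transformation law; moreover $\lim_{\tau\to i\infty}(fg)(\tau)$ is the product of the two finite limits, so $fg$ is holomorphic at infinity (equivalently, the product of two $q$-expansions with no negative powers of $q$ again has none). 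Extending bilinearly yields a multiplication $\cM_*(\Gamma)\times\cM_*(\Gamma)\to\cM_*(\Gamma)$ carrying $\cM_k(\Gamma)\times\cM_\ell(\Gamma)$ into $\cM_{k+\ell}(\Gamma)$. For step (iii), associativity, commutativity and distributivity are then automatic, being restrictions of the corresponding properties of the ring of holomorphic functions on $\HH$ (or, if one prefers the external-direct-sum viewpoint, they are checked first on homogeneous pieces, where they are just function identities, and then extended by distributivity); the constant function $1\in\cM_0(\Gamma)$ is a unit, and the inclusion $\CC\subseteq\cM_0(\Gamma)$ makes $\cM_*(\Gamma)$ a $\CC$-algebra graded by weight.

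There is no serious obstacle here; the one point that requires genuine argument rather than routine verification is that the sum $\bigoplus_k\cM_k(\Gamma)$ is direct inside the ring of holomorphic functions — i.e. that a nonzero holomorphic function cannot be modular of two distinct weights — so that the decomposition by weight is a bona fide grading. I would argue that if $f_{k_1}+\cdots+f_{k_r}=0$ with the $k_i$ distinct and $f_{k_i}\in\cM_{k_i}(\Gamma)$, then evaluating this identity at $\gamma_m\tau$ for $\gamma_m = \begin{pmatrix} 1 & 0 \\ m & 1\end{pmatrix}$ (which lies in $\Gamma$ for infinitely many $m$, since $\Gamma$ has finite index in $PSL_2\ZZ$) gives $\sum_i (m\tau+1)^{k_i}f_{k_i}(\tau)=0$; fixing $\tau\in\HH$ and viewing the left side as a polynomial in $m$ of degree $\max_i k_i$ vanishing at infinitely many $m$, a descending induction on the leading coefficient — a Vandermonde-type argument, using $\tau\neq 0$ — forces each $f_{k_i}(\tau)=0$, hence $f_{k_i}\equiv 0$. (Under the reading in which $\cM_*(\Gamma)$ is defined as a formal external direct sum this step is unnecessary, and only the weight-additivity of multiplication in step (ii) is needed.)
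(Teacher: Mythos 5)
Your proof is correct. The paper itself offers no argument for this proposition --- it is stated as routine, with the follow-up remark only addressing the stronger fact (finite generation, deferred to later and attributed to the geometry of $[\HH/\Gamma]$) --- so there is no paper proof to diverge from; your verification (closure of each $\cM_k(\Gamma)$ under linear combinations, the weight-additivity computation $(fg)(\gamma\tau)=(c\tau+d)^{k+\ell}(fg)(\tau)$ together with finiteness of the limit at $i\infty$, and the unit $1\in\cM_0(\Gamma)$) is exactly the standard one. Your closing observation is also a nice touch: since the paper defines $\cM_*(\Gamma)$ as a formal direct sum, the grading needs no further justification, but your argument that a nonzero holomorphic function cannot be modular of two distinct weights --- evaluating $\sum_i f_{k_i}=0$ at $\gamma_m\tau$ for the infinitely many $m$ with $\begin{pmatrix}1&0\\ m&1\end{pmatrix}\in\Gamma$ and comparing coefficients of the resulting polynomial in $m$ --- is sound and shows the internal sum inside the ring of holomorphic functions is direct as well.
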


In fact, it is a finitely generated \(\CC\)-algebra, which we will prove later. In essence, this is because the quotient \([\HH/\Gamma]\) is a compact orbifold curve; as such, sections of bundles over this stack form a finitely generated algebra.

We can say even more in the case that \(\Gamma = PSL_2\ZZ\), which points out the special r\^ole that \(\Delta(\tau)\) plays. We leave the proof as an excercise.

\begin{proposition}\label{prop_cusp_isom}
Multiplication by \(\Delta(\tau)\) induces an isomorphism
\[
\cM_k(PSL_2\ZZ) \to \cS_{k+12}(PSL_2\ZZ).
\]
\end{proposition}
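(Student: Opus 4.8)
The plan is to check that the map $f \mapsto f\Delta$ is a well-defined linear injection $\cM_k(PSL_2\ZZ) \to \cS_{k+12}(PSL_2\ZZ)$ and then to produce an inverse by dividing by $\Delta$. For the first part, suppose $f \in \cM_k(PSL_2\ZZ)$. Since $\Delta \in \cS_{12}(PSL_2\ZZ)$, the product $f\Delta$ is holomorphic on $\HH$, and for $\gamma = \begin{pmatrix} a & b \\ c & d\end{pmatrix}$ one has $(f\Delta)(\gamma\tau) = (c\tau+d)^k f(\tau)\,(c\tau+d)^{12}\Delta(\tau) = (c\tau+d)^{k+12}(f\Delta)(\tau)$, so $f\Delta$ transforms with weight $k+12$. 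Its $q$-expansion is the product of those of $f$ and of $\Delta$; as $\Delta = q + \cO(q^2)$ has vanishing constant term, so does $f\Delta$, whence $f\Delta$ is holomorphic at $i\infty$ with no constant term, i.e. $f\Delta \in \cS_{k+12}(PSL_2\ZZ)$. Linearity is obvious, and injectivity follows because $\Delta$ is not identically zero and the holomorphic functions on the connected domain $\HH$ form an integral domain.

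The crucial point for surjectivity is that $\Delta$ has no zeros on $\HH$. This is immediate from the product formula $\Delta(\tau) = q\prod_{k=1}^\infty (1-q^k)^{24}$ recorded above: for $\tau \in \HH$ we have $|q| = e^{-2\pi\im\tau} < 1$, so $q \neq 0$, each factor $1 - q^k$ is nonzero, and the infinite product converges to a nonzero limit. Granting this, let $g \in \cS_{k+12}(PSL_2\ZZ)$ and put $f = g/\Delta$. Then $f$ is holomorphic on all of $\HH$, and since $g(\gamma\tau) = (c\tau+d)^{k+12}g(\tau)$ while $\Delta(\gamma\tau) = (c\tau+d)^{12}\Delta(\tau) \neq 0$, the quotient $f$ transforms with weight $k$. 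It remains to check holomorphy at the cusp: write $g = \sum_{n\geq 1} a_n q^n = q\,\tilde g(q)$ with $\tilde g$ holomorphic near $q = 0$ (using that $g$ is a cusp form) and $\Delta = q\bigl(1 + \cO(q)\bigr)$ with $1 + \cO(q)$ holomorphic and nonvanishing at $q = 0$; then $f = \tilde g(q)/\bigl(1 + \cO(q)\bigr)$ is holomorphic at $q = 0$. Hence $f \in \cM_k(PSL_2\ZZ)$ and $f\Delta = g$, so the map is onto, and therefore an isomorphism.

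The only step that is not pure bookkeeping is the non-vanishing of $\Delta$ on $\HH$, which is where the argument genuinely uses something about $\Delta$ beyond its weight; I would invoke the Jacobi product formula for this. If one prefers not to use that identity, the same fact can be extracted from the valence (``$k/12$'') formula for $PSL_2\ZZ$: a nonzero weight-$12$ cusp form must carry its single zero in the standard fundamental domain at the cusp, so $\Delta$ cannot vanish anywhere on $\HH$. Either way, once non-vanishing is in hand the proposition follows immediately.
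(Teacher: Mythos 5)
Your proof is correct. Note that the paper itself gives no argument here: Proposition \ref{prop_cusp_isom} is explicitly left as an exercise (Exercise 3), so there is nothing to compare against, and your write-up is the standard solution. You correctly isolate the one substantive ingredient, the non-vanishing of \(\Delta(\tau)\) on \(\HH\), and everything else (weight bookkeeping, the vanishing constant term of \(f\Delta\), holomorphy of \(g/\Delta\) at the cusp via \(\Delta = q\bigl(1+\cO(q)\bigr)\) with a nonvanishing unit near \(q=0\)) is routine and handled properly. One small caveat: the product formula \(\Delta(\tau) = q\prod_{k=1}^\infty(1-q^k)^{24}\) is stated in the paper but not proved there, so strictly speaking invoking it imports an unproved identity; your fallback via the valence formula (a nonzero weight-\(12\) cusp form has total zero order \(1\), which is already absorbed by the zero at the cusp) is the self-contained way to close that gap, and it is good that you flagged it.
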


We also have the following, which can be generalized to groups other than \(PSL_2\ZZ\) with some care.

\begin{proposition}\label{prop_low_dim}
For even \(0 \leq k < 12\), the dimensions of the spaces \(\cM_k(PSL_2\ZZ)\) are given by
\begin{center}
\begin{tabular}{c|cccccc}
$k$ & $0$ & $2$ & $4$ & $6$ & $8$ & $10$\\
\hline
\(\dim \cM_k(PSL_2\ZZ)\) & $1$ & $0$ & $1$ & $1$ & $1$ & $1$
\end{tabular}
\end{center}
\end{proposition}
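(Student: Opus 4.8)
The plan is to deduce the entire table from one structural fact, the \emph{valence formula} (the "$k/12$ theorem"): for every nonzero $f \in \cM_k(PSL_2\ZZ)$,
\[
\mathrm{ord}_\infty(f) + \tfrac12\,\mathrm{ord}_i(f) + \tfrac13\,\mathrm{ord}_\rho(f) + \sum_{P}\mathrm{ord}_P(f) = \frac{k}{12},
\]
where $i$ and $\rho = e^{2\pi i/3}$ are the elliptic points of orders $2$ and $3$, the remaining sum runs over the other $PSL_2\ZZ$-orbits of points of $\HH$, and $\mathrm{ord}_\infty(f)$ is the order of vanishing in $q = e^{2\pi i\tau}$. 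I would establish it by integrating $\frac{1}{2\pi i}\,d\log f$ around the boundary of the standard fundamental domain $\{\,|\tau|\ge 1,\ |\re\tau|\le\tfrac12\,\}$, indented by small arcs about $i$, $\rho$, $\rho+1$ and truncated at height $\im\tau = T$: the two vertical sides cancel by $\tau\mapsto\tau+1$ periodicity, the top segment contributes $-\mathrm{ord}_\infty(f)$ (pass to the variable $q$), the two bottom arcs are interchanged by $\tau\mapsto -1/\tau$ and, using $f(-1/\tau)=\tau^k f(\tau)$, combine to give $k/12$, and the small indentations at the elliptic points pick up the fractional terms $\tfrac12\,\mathrm{ord}_i(f)$ and $\tfrac13\,\mathrm{ord}_\rho(f)$.

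Granting this, the rest is short. For $k=0$ the formula forces every order of vanishing of a nonzero form to be zero, so $f$ is a nowhere-vanishing holomorphic function on the compact orbifold $\overline{[\HH/PSL_2\ZZ]}$, hence constant; thus $\dim\cM_0(PSL_2\ZZ)=1$. Next I note that $f\mapsto$ (constant Fourier coefficient) is a linear map $\cM_k(PSL_2\ZZ)\to\CC$ whose kernel is exactly $\cS_k(PSL_2\ZZ)$, since there is a single cusp; hence $\dim\cM_k \le \dim\cS_k + 1$. For $0\le k<12$ a nonzero cusp form would have $\mathrm{ord}_\infty(f)\ge 1$, making the left-hand side of the valence formula at least $1 > k/12$; therefore $\cS_k(PSL_2\ZZ)=0$ in this range. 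Finally, for $k=4,6,8,10$ the Eisenstein series $G_k$ is a modular form with constant term $2\zeta(k)\neq 0$ (Proposition \ref{prop_fourier}), so the constant-term map is surjective and $\dim\cM_k(PSL_2\ZZ)=1$.

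The only remaining case is $k=2$, where there is no Eisenstein series and the bound above merely gives $\dim\cM_2\le 1$. Here I apply the valence formula directly: if $0\neq f\in\cM_2(PSL_2\ZZ)$ then the left side equals $\tfrac16$; since $\mathrm{ord}_\infty(f)$ and the $\mathrm{ord}_P(f)$ are non-negative integers, they must all vanish, leaving $\tfrac12\,\mathrm{ord}_i(f) + \tfrac13\,\mathrm{ord}_\rho(f) = \tfrac16$, i.e.\ $3\,\mathrm{ord}_i(f) + 2\,\mathrm{ord}_\rho(f) = 1$ with both orders non-negative integers — which is impossible. Hence $\cM_2(PSL_2\ZZ)=0$, and the table is complete. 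The single genuinely substantial step is the valence formula itself, and within its proof the delicate point is the bookkeeping of the indented contour near the corners $i$, $\rho$, $\rho+1$ (accounting for the interior angles $\pi$ and $\tfrac{2\pi}{3}$ of the fundamental domain) together with the identification of the two bottom arcs under $\tau\mapsto -1/\tau$; everything after that is elementary arithmetic.
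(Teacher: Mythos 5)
Your argument is correct, and it reaches the table by a genuinely different route than the paper does. The paper's proof is a sketch via the geometry of the quotient: compactify \(\HH/PSL_2\ZZ\) by adding the cusp, and apply Riemann--Roch to the relevant line bundle on the resulting (orbifold) curve, with the stated caveat about keeping track of orders of vanishing at the stacky points \(i\) and \(\rho\). You instead prove the valence formula \(\mathrm{ord}_\infty(f)+\tfrac12\mathrm{ord}_i(f)+\tfrac13\mathrm{ord}_\rho(f)+\sum_P\mathrm{ord}_P(f)=k/12\) by contour integration of \(d\log f\) over the indented fundamental domain, and then the table falls out by elementary arithmetic: cusp forms are killed for \(k<12\), the constant-term map bounds \(\dim\cM_k\) by \(\dim\cS_k+1\), Eisenstein series give the nonzero classes for \(k=4,6,8,10\), and weight \(2\) is excluded because \(3\,\mathrm{ord}_i+2\,\mathrm{ord}_\rho=1\) has no solution in non-negative integers. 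Your approach is more elementary and self-contained (no Riemann--Roch, no orbifold/stack bookkeeping beyond the angles at \(i\) and \(\rho\)), at the cost of being specific to \(PSL_2\ZZ\); the paper's Riemann--Roch viewpoint is what generalizes cleanly to other finite-index \(\Gamma\) and to uniform dimension formulas in all weights, which is exactly the generalization the paper alludes to afterwards. Two small points to tidy in a full write-up: in the contour argument one must also perturb the contour around any zeros of \(f\) lying on the boundary edges (not just at \(i\), \(\rho\), \(\rho+1\), \(\infty\)), and for \(k=0\) the quickest clean statement is to apply the valence formula to \(f-c\) for a value \(c\) attained by \(f\) (or invoke the maximum principle on the compactified quotient), rather than arguing from nowhere-vanishing alone.
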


\begin{proof}
We will provide a sketch of a proof. A good reference is \cite{gunning_modular}. The idea is to consider the quotient \(\HH/PSL_2\ZZ\), which is an orbifold Riemann surface. If we add in the cusp at infinity, then it is compact, and so we can use the Riemann-Roch theorem to compute the dimension of the space of sections of a line bundle on the quotient.

The only concern that we have to consider is that there is some stacky structure at the fixed points. However, we can surpass this by lifting the functions on the quotient to the cover, and paying particular attention to the order of vanishing of functions at those stacky points in the resulting quotient.
\end{proof}

\begin{remark}
This can be extended to other groups \(\Gamma\), but it is simplest to state for \(\Gamma = PSL_2\ZZ\).
\end{remark}

From these two propositions we are able to derive the following nice description of \(\cM_*(PSL_2\ZZ)\).

\begin{theorem}
We have the isomorphism of graded algebras
\[
\cM_*(PSL_2\ZZ) \cong \CC[G_4, G_6] \cong \CC[E_4, E_6].
\]
\end{theorem}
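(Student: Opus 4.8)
The plan is to treat the two isomorphisms separately. The identity $\CC[G_4,G_6]=\CC[E_4,E_6]$ is immediate: from the normalization remark, $G_{2k}=2\zeta(2k)E_{2k}$ with $2\zeta(2k)\neq0$, so each of $G_4,E_4$ is a scalar multiple of the other, likewise $G_6,E_6$, and the two sets generate the same subalgebra. It therefore remains to prove (i) that $E_4,E_6$ generate $\cM_*(PSL_2\ZZ)$ as a $\CC$-algebra, and (ii) that they are algebraically independent.

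\emph{Generation.} I would argue by induction on the weight $k$ that $\cM_k(PSL_2\ZZ)\subseteq\CC[E_4,E_6]$. Odd $k$ give $\cM_k=0$ by the remark on $-I\in PSL_2\ZZ$; for even $k$ with $0\le k\le10$, Proposition~\ref{prop_low_dim} says $\cM_k$ is at most one-dimensional and is visibly spanned by one of $1,E_4,E_6,E_4^2,E_4E_6$ (with $\cM_2=0$). For even $k\ge12$, choose $a,b\ge0$ with $4a+6b=k$, which is possible because every even integer $\ge4$ lies in $4\ZZ_{\ge0}+6\ZZ_{\ge0}$; then $g:=E_4^aE_6^b\in\cM_k$ has constant term $1$ in its $q$-expansion. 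Given any $f\in\cM_k$ with constant term $c$, the form $f-cg\in\cM_k$ has vanishing constant term, hence (there being a single cusp) is a cusp form, so by Proposition~\ref{prop_cusp_isom} it equals $\Delta\cdot h$ for some $h\in\cM_{k-12}$. By the inductive hypothesis $h\in\CC[E_4,E_6]$, and since $\Delta=(E_4^3-E_6^2)/1728\in\CC[E_4,E_6]$ as well, so is $f$. This exhibits a surjection $\CC[E_4,E_6]\srarrow\cM_*(PSL_2\ZZ)$.

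\emph{Algebraic independence.} Since the monomials $E_4^aE_6^b$ span $\cM_*$, it suffices to show that for every even $k$ the number $N_k$ of pairs $(a,b)$ with $a,b\ge0$ and $4a+6b=k$ equals $\dim\cM_k$; then in each weight these monomials form a basis, and since a polynomial relation in $E_4,E_6$ decomposes into weighted-homogeneous pieces each of which is a linear relation among linearly independent monomials, $E_4,E_6$ are algebraically independent. Both quantities obey $x_k=x_{k-12}+1$ for even $k\ge12$: for $\dim\cM_k$ this follows from the exact sequence $0\to\cS_k\to\cM_k\to\CC\to0$, where the last map sends $f$ to its constant term and is surjective thanks to the form $g$ above, together with the isomorphism $\cS_k\cong\cM_{k-12}$ of Proposition~\ref{prop_cusp_isom}; for $N_k$ it holds because $(a,b)\mapsto(a+3,b)$ bijects the weight-$(k-12)$ solutions onto the weight-$k$ solutions with $a\ge3$, while exactly one weight-$k$ solution has $a\in\{0,1,2\}$ (a check modulo $6$, using $k\ge12$). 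As $\dim\cM_k=N_k$ for even $0\le k\le10$ by Proposition~\ref{prop_low_dim} and both sides vanish for odd $k$, the recursion propagates the equality to all weights, finishing (ii).

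The substantive analytic and geometric input — Proposition~\ref{prop_low_dim}, obtained via Riemann–Roch, and the exactness underlying Proposition~\ref{prop_cusp_isom} — is already available, so what remains is essentially bookkeeping. The one point deserving care is that the constant-term map $\cM_k\to\CC$ is onto for every even $k\ge12$, which is precisely the elementary fact that every even integer $\ge4$ is of the form $4a+6b$ with $a,b\ge0$ (the unique missing even weight, $k=2$, being consistent with $\cM_2=0$); and one should note that the induction, descending in steps of $12$, always returns to the range $0\le k\le11$ covered by Proposition~\ref{prop_low_dim}, so negative weights never need to be discussed.
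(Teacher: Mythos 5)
Your proof is correct, and the generation half is essentially identical to the paper's argument: pick $\alpha,\beta\ge0$ with $4\alpha+6\beta=k$, subtract $a_0E_4^\alpha E_6^\beta$ to kill the constant term, apply Proposition \ref{prop_cusp_isom} to write the resulting cusp form as $\Delta\cdot h$ with $h\in\cM_{k-12}(PSL_2\ZZ)$, and induct down to the range covered by Proposition \ref{prop_low_dim}. Where you genuinely differ is on algebraic independence: the paper does not prove it but cites Exercise \ref{exer_alg_ind}, whereas you derive it from a Hilbert-function comparison, showing that the monomial count $N_k$ and $\dim\cM_k(PSL_2\ZZ)$ obey the same recursion $x_k=x_{k-12}+1$ for even $k\ge12$ (on one side via the exact sequence $0\to\cS_k\to\cM_k\to\CC\to0$, surjectivity of the constant-term map coming from $E_4^aE_6^b$, plus Proposition \ref{prop_cusp_isom}; on the other via the shift $(a,b)\mapsto(a+3,b)$ and the mod-$6$ check that exactly one solution has $a\le2$) and agree for even $0\le k\le10$ by Proposition \ref{prop_low_dim}, both vanishing in odd weights. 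Since the monomials already span by the generation step, equality of counts makes them a basis weight by weight, and decomposing any polynomial relation into weighted-homogeneous pieces then forces it to be trivial. What this buys is a self-contained proof of the full theorem (including the fact, implicit in the paper, that the dimension of $\cM_k$ equals the number of monomials $E_4^aE_6^b$ of weight $k$), at the modest cost of the elementary counting; the paper's route is shorter but leaves the independence as an exercise. Your care points — the single cusp of $PSL_2\ZZ$, the representability of every even $k\ge4$ as $4a+6b$ with $a,b\ge0$, and the fact that the descent by $12$ lands in the range $0\le k\le11$ — are exactly the right ones and are all handled correctly.
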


\begin{proof}
What we prove is that the monomials \(E_4(\tau)^\alpha E_6(\tau)^\beta\) form a basis of the space of modular forms.

For \(0 \leq k < 12\), this is clear by Proposition \ref{prop_low_dim}. So it suffices to show that this is true for \(k \geq 12\). So let \(k \geq 12\) be an even integer. In such a case, we can choose non-negative integers \(\alpha, \beta\) such that \(4\alpha + 6\beta = k\). So let \(f(\tau) \in \cM_k(PSL_2\ZZ)\), and write
\[
f(\tau) = \sum_{d=0}^\infty a_dq^d
\]
where \(q = e^{2\pi i \tau}\). Then it follows that
\[
f(\tau) - a_0E_4(\tau)^\alpha E_6(\tau)^\beta
\]
is a cusp form of weight \(k \geq 12\). From Proposition \ref{prop_cusp_isom}, it follows that we can write
\[
f(\tau) - a_0E_4(\tau)^\alpha E_6(\tau)^\beta = \Delta(\tau)g(\tau)
\]
where \(g(\tau) \in \cM_{k-12}(PSL_2\ZZ)\). Since the terms \(E_4(\tau), E_6(\tau)\) are algebraically independent (Exercise \ref{exer_alg_ind}), the claim follows from induction.
\end{proof}

This has a number of nice consequences. We can check, for example, that the dimensions of the homogeneous pieces are given by

\begin{center}
\begin{tabular}{c | c c c c c c c c c}
\(k\) & 0 & 2 & 4 & 6 & 8 & 10 & 12 & 14 & 16\\
\hline
\(\dim \cM_k(PSL_2\ZZ)\) & 1 & 0 & 1 & 1 & 1 & 1 & 2 & 1 & 2
\end{tabular}
\end{center}

In particular, we note that \(\dim \cM_8(PSL_2\ZZ) = 1\). This is of course generated by \(E_8(\tau)\). However, it also contains the form \(E_4(\tau)^2\)! It follows that we must have that \(E_8(\tau)\) is a multiple of \(E_4(\tau)^2\). Since they have the same constant term\footnote{This is why we choose this particular normalization}, they must be equal. This gives us some surprising equalities of sum-of-divisors functions, including the following proposition.

\begin{proposition} \label{prop_divisors}
We have the equality
\[
\sigma_7(d) = \sigma_3(d) + 120 \sum_{m=1}^{d-1} \sigma_3(m)\sigma_3(d-m)
\]
where as before, \(\sigma_r(d) = \sum_{k \mid d} k^r\).
\end{proposition}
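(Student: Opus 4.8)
The plan is to deduce this from the equality $E_8(\tau) = E_4(\tau)^2$ noted just above: since $\dim \cM_8(PSL_2\ZZ) = 1$ by the structure theorem and both $E_8$ and $E_4^2$ are modular of weight $8$ with constant term $1$, they coincide. The proposition is then nothing more than the comparison of Fourier coefficients of $q^d$ on the two sides of this identity.

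First I would record the two $q$-expansions I need. Specializing Proposition \ref{prop_fourier} to weights $4$ and $8$, dividing by $2\zeta(2k)$, and using $\zeta(4) = \pi^4/90$ and $\zeta(8) = \pi^8/9450$, the normalization constants come out to $240$ and $480$, so that
\[
E_4(\tau) = 1 + 240\sum_{d=1}^\infty \sigma_3(d)q^d, \qquad E_8(\tau) = 1 + 480\sum_{d=1}^\infty \sigma_7(d)q^d,
\]
with $\sigma_r(d) = \sum_{k\mid d}k^r$ as in the statement.

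Next I would square the series for $E_4$ and match it with $E_8$:
\[
1 + 480\sum_{d=1}^\infty \sigma_7(d)q^d = 1 + 480\sum_{d=1}^\infty \sigma_3(d)q^d + 240^2\sum_{d=2}^\infty\Big(\sum_{m=1}^{d-1}\sigma_3(m)\sigma_3(d-m)\Big)q^d.
\]
Extracting the coefficient of $q^d$ for $d \geq 1$ gives
\[
480\,\sigma_7(d) = 480\,\sigma_3(d) + 240^2\sum_{m=1}^{d-1}\sigma_3(m)\sigma_3(d-m),
\]
and dividing through by $480$, together with $240^2/480 = 120$, yields exactly the claimed identity.

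The only genuine input is $E_8 = E_4^2$, which rests on the dimension count $\dim\cM_8(PSL_2\ZZ) = 1$; the remainder is bookkeeping with the Eisenstein normalizations. The one point where I would slow down is the shift in the meaning of $\sigma_r$ relative to the earlier remark, so I would double-check that the exponents appearing in the $q$-expansions above are $3$ and $7$ (matching $\sigma_r(d) = \sum_{k\mid d}k^r$), and that the constants $240$, $480$ are computed with that convention.
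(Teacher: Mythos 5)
Your proposal is correct and follows exactly the paper's route: the paper's proof is simply the observation that the identity is the coefficient-wise content of $E_8(\tau) = E_4(\tau)^2$, which you have spelled out with the correct normalizations ($240$, $480$) and the correct exponent convention for $\sigma_r$. Your expansion of the bookkeeping, including the check $240^2/480 = 120$, is accurate and adds nothing beyond what the paper intends.
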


\begin{proof}
This follows directly from the equality \(E_8(\tau) = E_4(\tau)^2\).
\end{proof}

\begin{remark}
Perhaps more interestingly, consider the following. Let \(\Lambda\) be a rank \(r\)  lattice, and let
\[
\theta_\Lambda(q) = \sum_{\lambda \in \Lambda} q^{\frac{1}{2}\lambda \cdot \lambda}
\]
be the \idx{theta function} of \(\Lambda\). For example, if \(\Lambda = \ZZ \subset \RR\), then we would have that
\[
\theta_\ZZ = 1 + 2q^{1/2} + 2q + 2q^{3/2} + \cdots
\]
which is one of the classic Jacobi theta functions. It turns out that if \(\Lambda\) is a \idx{unimodular lattice} (i.e. one for which the intersection form has determinant 1), then the the theta function \(\theta_\Lambda(e^{2\pi i \tau})\) is a modular form\footnote{This actually requires an extension of the notion of modularity to (a) deal with characters of the group \(\Gamma\) and (b) to deal with forms of half-integer wieght. However, for the case at hand (\(r = 8\)), no such generalization is needed.} of weight \(\frac{r}{2}\). In particular, if we consider the \(E_8\) lattice, whose Dynkin diagram is given by
\[
\xymatrix{
\bullet \ar@{-}[r] & \bullet \ar@{-}[r] &\bullet \ar@{-}[r]\ar@{-}[d] &\bullet \ar@{-}[r] &\bullet \ar@{-}[r] &\bullet \ar@{-}[r] &\bullet \\
&& \bullet
}
\]
then this is a modular form of weight 4. That is, it must be a multiple of \(E_4(\tau)\). In particular, since the two functions have constant term 1, we find that
\[
\theta_{E_8}(e^{2\pi i \tau}) = E_4(\tau)
\]
and in particular, the number of elements of norm 2 in \(E_8\) is 240.
\end{remark}

Similar to the proof of Proposition \ref{prop_divisors} we also have, since \(\dim \cM_{10}(PSL_2\ZZ) = 1\), that \(E_4(\tau)E_6(\tau) = E_{10}(\tau)\). The first case without this situation is \(\cM_{12}(PSL_2\ZZ)\), which as we saw earlier yields the discriminant \(\Delta(\tau)\).

\section{Quasi-modular forms}

The theory of modular forms is very beautiful, but one could argue that it suffers from one blemish. After all, \(G_2(\tau)\) has just as nice a definition and Fourier expansion as the higher weight Eisenstein series, but it is not modular. This feels like a bit of a shortcoming in this theory.

Furthermore, many nice situations in which generating functions of this type arise involve \(E_2(\tau)\)! Thus it would be nice to have a framework which includes all of the Eisenstein series.

To do so, we note first the following Proposition.

\begin{proposition}
The function \(E_2(\tau)\) satisfies the following transformation law:
\[
(c\tau + d)^{-2}E_2(\gamma \tau) = E_2(\tau) + 12\frac{c}{c\tau + d}.
\]
\end{proposition}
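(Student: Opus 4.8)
The plan is to reduce the identity to the two standard generators of \(PSL_2\ZZ\) and then dispatch the non-trivial one by exploiting the failure of absolute convergence in the series defining \(G_2\). First I would note that the set of \(\gamma = \begin{pmatrix} a & b \\ c & d\end{pmatrix}\) satisfying the stated identity is closed under multiplication — writing \(j(\gamma,\tau) = c\tau+d\), which obeys \(j(\gamma_1\gamma_2,\tau) = j(\gamma_1,\gamma_2\tau)\,j(\gamma_2,\tau)\) and \(\frac{d}{d\tau}(\gamma\tau) = j(\gamma,\tau)^{-2}\), a short substitution shows that if the transformation law holds for \(\gamma_1\) and \(\gamma_2\) then it holds for \(\gamma_1\gamma_2\), the cross terms collapsing to the required shape precisely because \(\det\gamma_i = 1\). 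Hence, as in the proof that \(G_{2k}\) is modular, it suffices to check the two generators of \(PSL_2\ZZ\). For \(T : \tau\mapsto\tau+1\) one has \(c = 0\), \(d = 1\), so the claimed defect vanishes and the assertion is just \(E_2(\tau+1) = E_2(\tau)\), immediate from the \(q\)-expansion. For \(S : \tau\mapsto-1/\tau\), \(c = 1\), \(d = 0\), and the assertion becomes \(E_2(-1/\tau) = \tau^2 E_2(\tau) + 12\tau\), equivalently a statement about \(G_2\) after clearing the factor \(2\zeta(2)\).

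For the \(S\)-case the key point is that the double series for \(G_2\) is only conditionally convergent, so the reordering used for \(G_{2k}\) with \(k>1\) is not available. I would introduce the series \(H_2(\tau)\) obtained from \(G_2(\tau) = \sum_m\big(\sum_n(m\tau+n)^{-2}\big)\) by interchanging the order of summation, and establish two facts. First, \(H_2(-1/\tau) = \tau^2 G_2(\tau)\): this is the same formal relabeling of lattice indices used to prove \(G_{2k}\) modular, except carried out with the summation order held fixed, which is exactly why one is forced to pass through \(H_2\). Second, \(G_2 - H_2 = 2\pi i/\tau\). For this I would compare, in the two orders of summation, the absolutely convergent double series
\[
\Phi(\tau) = \sum_{m,n}\Big(\frac{1}{(m\tau+n)(m\tau+n+1)} - \frac{1}{(m\tau+n)^2}\Big),
\]
whose summands are \(O(|m\tau+n|^{-3})\) so that reordering is legitimate. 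With \(n\) innermost the first part telescopes to \(0\) for each fixed \(m\), giving \(\Phi = -G_2\); with \(m\) innermost the first part evaluates, via \(\sum_{m\in\ZZ}(m\tau+x)^{-1} = \frac{\pi}{\tau}\cot\frac{\pi x}{\tau}\), to a telescoping sum in \(n\) whose symmetric limit is \(\lim_{N\to\infty}\frac{\pi}{\tau}\big(\cot\frac{-\pi N}{\tau} - \cot\frac{\pi(N+1)}{\tau}\big) = -\frac{2\pi i}{\tau}\), giving \(\Phi = -\frac{2\pi i}{\tau} - H_2\). Subtracting yields \(G_2 - H_2 = \frac{2\pi i}{\tau}\), hence \(G_2(-1/\tau) = \tau^2 G_2(\tau) - 2\pi i\tau\); dividing by \(2\zeta(2) = \pi^2/3\) gives the \(S\)-case, and hence, via the reduction above, the general identity with defect \(\tfrac{12c}{c\tau+d}\) (here one must keep careful track of the factor \(2\pi i\) hidden in \(q = e^{2\pi i\tau}\), whose placement in the normalization varies between references).

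Everything except the evaluation \(G_2 - H_2 = 2\pi i/\tau\) is bookkeeping; the real work — and the main obstacle — is that step, where the delicate points are the uniform estimate making \(\Phi\) absolutely summable over the lattice \(\ZZ\tau + \ZZ\), the handful of degenerate terms near the origin (where \(m\tau+n\) or \(m\tau+n+1\) vanishes) that must be split off before telescoping, and the interchange of limits in the \(\cot\)-evaluation. As an alternative that avoids all of this, one may instead use \(E_2(\tau) = \frac{1}{2\pi i}\frac{d}{d\tau}\log\Delta(\tau)\), which follows at once from \(\Delta(\tau) = q\prod_{k\ge1}(1-q^k)^{24}\), and differentiate the weight-\(12\) transformation law \(\Delta(\gamma\tau) = (c\tau+d)^{12}\Delta(\tau)\) logarithmically: the term \(\frac{d}{d\tau}\log(c\tau+d)^{12} = \frac{12c}{c\tau+d}\) is precisely the defect, and no conditionally convergent series appears. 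I would present the reordering argument as the proof proper and record this as a remark.
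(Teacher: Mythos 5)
Your proposal is correct in outline but takes a genuinely different route from the paper's, which proves the proposition in one line from the identity \(E_2(\tau) = \frac{d}{d\tau}\log\Delta(\tau)\) (Exercise \ref{exer_log_deriv}): differentiating the weight-12 law \(\Delta(\gamma\tau) = (c\tau+d)^{12}\Delta(\tau)\) and dividing by \(\Delta(\gamma\tau)\) produces the defect \(12\frac{c}{c\tau+d}\) at once --- exactly the argument you relegate to your closing remark. Your main route instead works directly with the conditionally convergent series for \(G_2\): reduce to the generators \(S,T\) via the cocycle identity for \(j(\gamma,\tau)=c\tau+d\) (your observation that the cross terms cancel because \(\det\gamma_i=1\) checks out, and since \(T^{-1}=STSTS\) every element of \(PSL_2\ZZ\) is a positive word in \(S,T\), so closure under multiplication does suffice), then compare \(G_2\) with its reordered companion \(H_2\) through the absolutely convergent telescoping series \(\Phi\), getting \(G_2-H_2=\frac{2\pi i}{\tau}\) and hence \(G_2(-1/\tau)=\tau^2G_2(\tau)-2\pi i\tau\); your cotangent limit \(-\frac{2\pi i}{\tau}\) is the right value for \(\tau\in\HH\). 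This is the classical rearrangement argument, and it has the virtue of making precise the paper's own remark that the anomaly comes from reversing the order of summation in \(G_2(-1/\tau)\); the price is the analytic care you honestly flag (the degenerate lattice terms, the principal-value cotangent sum, the uniform estimates), none of which the paper's route needs --- though the paper's brevity is bought by assuming the product formula for \(\Delta\) and its weight-12 modularity. One caution on constants: with the normalization \(E_2 = 1 - 24\sum_{d\ge1}\sigma_1(d)q^d\), your computation yields the defect \(\frac{12}{2\pi i}\frac{c}{c\tau+d}\), which agrees with the stated \(12\frac{c}{c\tau+d}\) only under the paper's convention in which the \(2\pi i\) is absorbed into the derivative (as in its statement of Exercise \ref{exer_log_deriv}, where \(E_2 = \frac{d}{d\tau}\log\Delta\) rather than \(\frac{1}{2\pi i}\frac{d}{d\tau}\log\Delta\)); you flag this ambiguity, but in a final write-up you should fix the convention explicitly so that the \(S\)-case you prove, \(E_2(-1/\tau)=\tau^2E_2(\tau)+\frac{12\tau}{2\pi i}\), visibly matches the proposition as written.
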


\begin{proof}
We note from Exercise \ref{exer_log_deriv} that we have the equality
\[
E_2(\tau) = \frac{d}{d\tau}\log\Delta(\tau).
\]
Consider now the fact that \(\Delta(\tau)\) is weight 12 modular. That is, \(\Delta(\gamma \tau) = (c\tau + d)^{12}\Delta(\tau)\). If we then proceed to differentiate both sides of this equation (noting that \(\frac{d}{d\tau}\frac{a\tau + b}{c\tau + d} = \frac{1}{(c\tau + d)^2}\)), we find that
\[
(c\tau + d)^{-2}\Delta'(\gamma\tau) = (c\tau + d)^{12}\Delta(\tau) + 12c(c\tau + d)^{11}\Delta(\tau).
\]
If we then divide both sides of the equation by \(\Delta(\gamma\tau)\), we have that
\[
(c\tau + d)^{-2} E_2(\gamma\tau) = E_2(\tau) + 12\frac{c}{c\tau + d}
\]
as claimed.
\end{proof}

What we can conclude from this is that even though \(E_2(\tau)\) is not modular, it does satisfy a modified transformation law, which we will use to motivate our definition of quasi-modular forms.

\begin{remark}
It is worth noting that this anomalous transformation law can be derived from a careful attention to what happens in the non-absolutely convergent series
\[
G_2(\tau) = \sum_{(m, n) \neq (0,0)} \frac{1}{(m\tau + n)^2}
\]
when we reverse the order of summation after considering the modular transformation \(G_2(-1/\tau)\).
\end{remark}

\begin{definition}\label{qmod_A}
A \idx{quasi-modular form} of weight \(k\) and depth (at most) \(r\) for the group \(\Gamma\) is a holomorphic function \(f: \HH \to \CC\) such that, for all \(\gamma = \begin{pmatrix}a & b \\ c & d \end{pmatrix} \in \Gamma\), 
\[
(c\tau + d)^{-k} f(\gamma \tau)
\]
can be written as a polynomial of degree (at most) \(r\) in \(\frac{c}{c\tau + d}\) with holomorphic coefficients. That is,
\[
(c\tau + d)^{-k} f(\gamma \tau) = \sum_{m=0}^r f_m(\tau) \Big(\frac{c}{c\tau+d}\Big)^m
\]
for some holomorphic functions \(f_m(\tau)\).
\end{definition}

It follows immediately that any modular form is quasi-modular (of depth 0), and that \(E_2(\tau)\) is quasi-modular of weight 2 and depth 1.

\begin{remark}
By considering the matrix \(\gamma = \begin{pmatrix}1 & 0 \\ 0 & 1 \end{pmatrix}\), we see immediately that \(f_0(\tau) = f(\tau)\).

We can actually say a little more (which is left as an exercise). Given  \(f(\tau)\) and its companions, \(f_0(\tau), \ldots, f_r(\tau)\), it can be shown that each of the functions \(f_m(\tau)\) is a modular form of weight \(k - 2m\) for the same group \(\Gamma\).
\end{remark}

Now, since we have expanded our definition of modularity, we should be cautious: after all, it is possible that we have expanded it too far. Perhaps every function transforms in this way? It is natural to ask what is the relationship between the \(\CC\)-algebra of quasi-modular forms and the \(\CC\)-algebra of modular forms.

Fortunately, it turns out that this is just the right generalization, as we see in the following theorem.

\begin{theorem}
Let \(\cM_*(\Gamma)\) denote the graded \(\CC\)-algebra of modular forms for the group \(\Gamma\), and let \(\cQM_*(\Gamma)\) denote the graded \(\CC\)-algebra of quasi-modular forms for the same group. Then we have an isomorphism of graded \(\CC\)-algebras
\[
\cQM_*(\Gamma) \cong \cM_*(\Gamma) \otimes_\CC \CC[G_2(\tau)]
\]
(where of course, the weight of \(G_2(\tau)\) is 2).
\end{theorem}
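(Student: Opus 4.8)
We sketch the argument. The plan is to exhibit the natural homomorphism of graded \(\CC\)-algebras
\[
\Phi\colon \cM_*(\Gamma)\otimes_\CC\CC[G_2]\;\cong\;\cM_*(\Gamma)[G_2]\longrightarrow\cQM_*(\Gamma),
\]
where the source is the polynomial ring over \(\cM_*(\Gamma)\) in one free variable \(G_2\) of weight \(2\) and \(\Phi\) sends a monomial \(h\cdot G_2^{\,j}\) to the product of the functions \(h\) and \(G_2^{\,j}\); the goal is to show \(\Phi\) is an isomorphism by checking it is well defined, injective and surjective. For well-definedness I would first record the closure of quasi-modularity under products: writing \(X_\gamma=\frac{c}{c\tau+d}\), if \((c\tau+d)^{-k_i}f_i(\gamma\tau)=\sum_m (f_i)_m(\tau)\,X_\gamma^{\,m}\) with degree at most \(r_i\) in \(X_\gamma\), then \((c\tau+d)^{-(k_1+k_2)}f_1(\gamma\tau)f_2(\gamma\tau)\) is the product of these two polynomials, hence again a polynomial in \(X_\gamma\), of degree \(\le r_1+r_2\), with holomorphic coefficients; so \(f_1f_2\) is quasi-modular of weight \(k_1+k_2\). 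Since \(G_2\) is quasi-modular of weight \(2\) and constants are modular of weight \(0\), \(\Phi\) is a graded algebra homomorphism, and it suffices to verify injectivity and surjectivity in each weight \(k\), where the weight-\(k\) summand of the source is spanned by the \(h\,G_2^{\,j}\) with \(h\in\cM_{k-2j}(\Gamma)\).

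Injectivity of \(\Phi\) is precisely the transcendence of \(G_2\) over \(\cM_*(\Gamma)\). Suppose \(\sum_{j=0}^n h_j\,G_2^{\,j}=0\) identically on \(\HH\), with \(h_j\in\cM_{k-2j}(\Gamma)\) and \(h_n\ne0\). Applying the weight-\(k\) slash by \(\gamma\in\Gamma\), using the modularity of each \(h_j\) and the transformation law \((c\tau+d)^{-2}G_2(\gamma\tau)=G_2(\tau)+\mu\,X_\gamma\) (a nonzero rescaling \(\mu\ne0\) of the constant appearing in the Proposition for \(E_2\)), one obtains the identity \(\sum_{j=0}^n h_j(\tau)\,\bigl(G_2(\tau)+\mu X_\gamma\bigr)^j=0\). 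For a fixed \(\tau\), as \(\gamma\) ranges over \(\Gamma\) the value \(X_\gamma\) takes infinitely many values: since \(\Gamma\) has finite index, some power \(\bigl(\begin{smallmatrix}1&0\\1&1\end{smallmatrix}\bigr)^N\) lies in \(\Gamma\), and for \(\gamma=\bigl(\begin{smallmatrix}1&0\\\ell N&1\end{smallmatrix}\bigr)\) one has \(X_\gamma=\frac{\ell N}{\ell N\tau+1}\), injective in \(\ell\). Thus the degree-\(\le n\) polynomial in \(X_\gamma\) above vanishes identically in \(X_\gamma\), so its leading coefficient \(\mu^{\,n}h_n(\tau)\) is identically zero, contradicting \(h_n\ne 0\).

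Surjectivity I would prove by induction on an integer \(r\ge0\) bounding the depth of a quasi-modular form \(f\) of weight \(k\). If \(r=0\) then \(f\) is modular and \(f=\Phi(f)\). If \(r\ge1\), let \(f_r\) be the coefficient of \(X_\gamma^{\,r}\) in \((c\tau+d)^{-k}f(\gamma\tau)=\sum_{m=0}^r f_m(\tau)\,X_\gamma^{\,m}\); by the remark following Definition~\ref{qmod_A}, left there as an exercise, \(f_r\) is a modular form of weight \(k-2r\). Expanding \((G_2+\mu X_\gamma)^r\) shows that \(G_2^{\,r}\) is quasi-modular of weight \(2r\) whose coefficient of \(X_\gamma^{\,r}\) is the nonzero constant \(\mu^{\,r}\); hence \(f_r\,G_2^{\,r}\) is quasi-modular of weight \(k\) with coefficient of \(X_\gamma^{\,r}\) equal to \(\mu^{\,r}f_r\) (multiplying a depth-\(0\) form into a product multiplies top coefficients). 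Therefore \(g:=f-\mu^{-r}f_r\,G_2^{\,r}\) is quasi-modular of weight \(k\) and depth \(\le r-1\). By the inductive hypothesis \(g=\Phi(P)\) for some \(P\) in the weight-\(k\) part of the source, and then \(f=\Phi\bigl(P+\mu^{-r}f_r\,G_2^{\,r}\bigr)\), which completes the induction.

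The step I expect to carry the genuine content is the fact cited above — that the top companion \(f_r\) of a weight-\(k\), depth-\(r\) quasi-modular form is modular of weight \(k-2r\) — so if the exercise were not to be assumed I would supply the cocycle computation. With \(j(\gamma,\tau)=c\tau+d\) one has \(X_\gamma(\tau)=\frac{d}{d\tau}\log j(\gamma,\tau)\); differentiating \(j(\gamma_1\gamma_2,\tau)=j(\gamma_1,\gamma_2\tau)\,j(\gamma_2,\tau)\) gives \(X_{\gamma_1\gamma_2}(\tau)=X_{\gamma_1}(\gamma_2\tau)\,j(\gamma_2,\tau)^{-2}+X_{\gamma_2}(\tau)\). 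Substituting this into the consistency relation obtained by slashing \(f\) first by \(\gamma_1\) and then by \(\gamma_2\) and comparing with slashing by \(\gamma_1\gamma_2\), and then, for fixed \(\gamma_2\) and \(\tau\), equating the coefficients of the highest power of the quantity \(X_{\gamma_1}(\gamma_2\tau)\) (which takes infinitely many values as \(\gamma_1\) varies, by the same argument as above), one reads off \(f_r(\gamma_2\tau)=j(\gamma_2,\tau)^{\,k-2r}f_r(\tau)\); holomorphy of \(f_r\) on \(\HH\) is built into the definition of quasi-modular form, and holomorphy at the cusps is inherited from that of \(f\) exactly as for modular forms. The only remaining formalities — that a nonzero quasi-modular form has a well-defined weight, and the minor bookkeeping between ``depth at most \(r\)'' and ``depth exactly \(r\)'' in the induction — are routine and I would dispatch them quickly.
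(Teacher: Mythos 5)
Your argument is correct, and its core — the induction on depth in which one subtracts \(\mu^{-r} f_r\,G_2^{\,r}\) (the paper's \(f_r(\tau)\bigl(E_2(\tau)/12\bigr)^r\)) to strictly lower the depth, using the fact that the top companion \(f_r\) is modular of weight \(k-2r\) — is exactly the proof given in the paper. You additionally supply what the paper leaves implicit or as exercises, namely the injectivity (transcendence of \(G_2\) over \(\cM_*(\Gamma)\), via the infinitely many values of \(c/(c\tau+d)\) as \(\gamma\) ranges over the finite-index group \(\Gamma\)) and the cocycle computation establishing the modularity of \(f_r\); both of these supplements are sound.
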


\begin{proof}
The proof of this is surprisingly simple. We simply induct on the depth of the form.

Let \(f(\tau)\) be quasi-modular of weight \(k\) and depth \(r\). We claim that the function
\[
F(\tau) = f(\tau) - f_r(\tau)\Big(\frac{E_2(\tau)}{12}\Big)^r
\]
is quasi-modular of weight \(k\) and depth strictly less than \(r\), from which the conclusion will follow.

Let us consider \((c\tau + d)^{-k}F(\gamma\tau)\). Then we find that
\begin{align*}
(c\tau+d)^{-k}F(\gamma\tau)
&= (c\tau + d)^{-k}\Big(f(\gamma\tau) - f_r(\gamma\tau)\Big(\frac{E_2(\gamma\tau)}{12}\Big)^r\Big) \\
&= (c\tau + d)^{-k}f(\gamma\tau) - \frac{1}{12^r}(c \tau + d)^{-k+2r}f_r(\gamma\tau)\big((c\tau + d)E_2(\gamma\tau)\big)^r
\end{align*}
which, due to the (quasi-)modularity of each of the factors, becomes
\begin{align*}
(c\tau+d)^{-k}F(\gamma\tau)
&= \sum_{m=0}^r f_m(\tau)\Big(\frac{c}{c\tau+d}\Big)^m - \frac{1}{12^r}f_r(\tau)\Big(E_2(\tau) + \frac{12c}{c\tau+d}\Big)^r\\
&= \sum_{m=0}^r f_m(\tau)\Big(\frac{c}{c\tau+d}\Big)^m \\
&\quad -\frac{1}{12^r}f_r(\tau) \bigg(\sum_{m=0}^r {r \choose {r-m}}E_2(\tau)^{r-m}\Big(\frac{12c}{c\tau+d}\Big)^{m}\bigg).
\end{align*}
However, we see that the \(m = r\) terms of both of these are exactly
\[
f_r(\tau)\Big(\frac{c}{c\tau+d}\Big)^r
\]
which cancel out, whence the claim.
\end{proof}

This leads us to the special case for \(\Gamma = PSL_2\ZZ\).

\begin{corollary}
The \(\CC\)-algebra of quasi-modular forms for \(PSL_2\ZZ\) is
\[
\cQM_*(PSL_2\ZZ) \cong \CC[G_2, G_4, G_6].
\]
\end{corollary}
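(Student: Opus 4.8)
The plan is simply to combine the two structure theorems already in hand. First I would apply the preceding theorem with \(\Gamma = PSL_2\ZZ\), which gives the isomorphism of graded \(\CC\)-algebras
\[
\cQM_*(PSL_2\ZZ) \cong \cM_*(PSL_2\ZZ) \otimes_\CC \CC[G_2].
\]
Then I would substitute the description \(\cM_*(PSL_2\ZZ) \cong \CC[G_4, G_6]\) coming from the structure theorem for modular forms, obtaining
\[
\cQM_*(PSL_2\ZZ) \cong \CC[G_4, G_6] \otimes_\CC \CC[G_2].
\]

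The only remaining step is the purely algebraic identification \(\CC[G_4, G_6] \otimes_\CC \CC[G_2] \cong \CC[G_2, G_4, G_6]\): the tensor product over \(\CC\) of a polynomial algebra in two generators with a polynomial algebra in a third generator is the polynomial algebra in all three, via the map sending \(p \otimes q\) to \(pq\). I would check that this respects the grading by noting that \(G_2, G_4, G_6\) carry weights \(2, 4, 6\) and that the grading on the tensor product is precisely the one induced by these weights. To make the statement literally an assertion about the subalgebra of \(\cQM_*(PSL_2\ZZ)\) generated by the honest Eisenstein series, I would also record that the composite isomorphism carries the abstract generators to the functions \(G_2, G_4, G_6\) themselves — the general theorem produces the tensor factor \(\CC[G_2]\) with \(G_2\) the genuine weight-\(2\) Eisenstein series, and the modular-forms theorem produces \(\CC[G_4, G_6]\) likewise — so \(\cQM_*(PSL_2\ZZ)\) is freely generated as a \(\CC\)-algebra by these three forms.

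There is no genuinely hard step; the single point worth flagging is that algebraic independence of \(G_2, G_4, G_6\) is being invoked, and this is exactly the combined content of the two theorems: independence of \(G_4\) and \(G_6\) is the earlier exercise, while independence of \(G_2\) over \(\CC[G_4, G_6]\) is encoded in the appearance of the free polynomial factor \(\CC[G_2]\) in the general structure theorem. If a self-contained argument were preferred, one could instead rule out a nontrivial polynomial relation among \(G_2, G_4, G_6\) directly by comparing \(q\)-expansions (or by a weight-versus-dimension count against the known values of \(\dim \cM_k(PSL_2\ZZ)\)), but deducing the corollary from the two theorems is the cleanest route.
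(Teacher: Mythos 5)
Your proposal is correct and follows exactly the route the paper intends: the corollary is deduced by specializing the structure theorem \(\cQM_*(\Gamma) \cong \cM_*(\Gamma) \otimes_\CC \CC[G_2]\) to \(\Gamma = PSL_2\ZZ\) and substituting \(\cM_*(PSL_2\ZZ) \cong \CC[G_4, G_6]\). Your added remarks on the grading and on algebraic independence are sound but not beyond what the paper's stated results already supply.
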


One other nice property that this algebra has is essentially due to S. Ramanujan. He noted that

\begin{proposition}\label{prop_derivatives_eisenstein}
Let \(D = \frac{1}{2\pi i} \frac{d}{d\tau} = q\frac{d}{dq}\). Then we have the following relations amongst the Eisenstein series.
\begin{gather*}
DE_2(\tau) = \frac{E_2(\tau)^2 - E_4(\tau)}{12}.\\
DE_4(\tau) = \frac{E_2(\tau)E_4(\tau) - E_6(\tau)}{3}. \\
DE_6(\tau) = \frac{E_2(\tau)E_6(\tau) - E_4(\tau)^2}{2}.
\end{gather*}
\end{proposition}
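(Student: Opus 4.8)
The plan is to use the structure theorem just established, $\cQM_*(PSL_2\ZZ)\cong\CC[E_2,E_4,E_6]$, together with one extra ingredient: the operator $D=\frac{1}{2\pi i}\frac{d}{d\tau}$ sends a quasi-modular form of weight $k$ and depth (at most) $r$ to one of weight $k+2$ and depth (at most) $r+1$. Granting this, each of $DE_2$, $DE_4$, $DE_6$ is forced into a tiny, explicitly understood space of quasi-modular forms, and comparing the constant term and the coefficient of $q$ on both sides pins down the unique linear combination, which will be exactly the one claimed.

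I would first prove the derivation lemma. Fix $\gamma\in\Gamma$ and set $X=\tfrac{c}{c\tau+d}$, so that $\tfrac{d}{d\tau}X=-X^2$ and $\tfrac{d}{d\tau}(c\tau+d)^k=k(c\tau+d)^kX$. Starting from $f(\gamma\tau)=(c\tau+d)^k\sum_{m=0}^r f_m(\tau)X^m$ and differentiating in $\tau$ (using $\tfrac{d}{d\tau}\gamma\tau=(c\tau+d)^{-2}$ on the left-hand side), one finds after dividing by $2\pi i$ that $(c\tau+d)^{-(k+2)}(Df)(\gamma\tau)=\sum_{m=0}^r\big(Df_m(\tau)X^m+\tfrac{k-m}{2\pi i}f_m(\tau)X^{m+1}\big)$, a polynomial in $X$ of degree $\le r+1$ with holomorphic coefficients. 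This is the claim. Applied to the three generators (all of depth $0$ except $E_2$, which has depth $1$), it gives: $DE_2$ is quasi-modular of weight $4$ and depth $\le 2$; $DE_4$ of weight $6$ and depth $\le 1$; $DE_6$ of weight $8$ and depth $\le 1$.

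Next I would identify those spaces. Under $\cQM_*\cong\cM_*\otimes_\CC\CC[E_2]$, a quasi-modular form of weight $k$ and depth $\le r$ is precisely an element of $\bigoplus_{j=0}^r\cM_{k-2j}(PSL_2\ZZ)\,E_2^j$. Inserting the dimensions from Proposition \ref{prop_low_dim} ($\cM_0=\CC$, $\cM_2=0$, $\cM_4=\CC E_4$, $\cM_6=\CC E_6$, $\cM_8=\CC E_4^2$) yields $DE_2\in\langle E_4,E_2^2\rangle$, $DE_4\in\langle E_6,E_2E_4\rangle$ and $DE_6\in\langle E_4^2,E_2E_6\rangle$. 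Finally, using $E_2=1-24q+\cdots$, $E_4=1+240q+\cdots$, $E_6=1-504q+\cdots$ and $D\big(\sum a_nq^n\big)=\sum na_nq^n$, I would write each of $DE_2,DE_4,DE_6$ as an unknown combination of its two basis elements, force the constant terms to match (so the two coefficients sum to $0$) and the coefficients of $q$ to match, and solve the three $2\times2$ systems; the solutions are $(-\tfrac1{12},\tfrac1{12})$, $(-\tfrac13,\tfrac13)$, $(-\tfrac12,\tfrac12)$ respectively, which are exactly the stated identities.

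The part I expect to be the actual work is the derivation lemma: one must check that combining the prefactor $(c\tau+d)^{-k}$ (whose $\tau$-derivative contributes a factor $X$) with the relation $\tfrac{d}{d\tau}X=-X^2$ raises the degree in $X$ by exactly one, and in particular does not cause it to blow up. A variant that bypasses the general lemma is to note that $D\Delta=E_2\Delta$ (from the relation $E_2=D\log\Delta$) together with $\Delta=\tfrac1{1728}(E_4^3-E_6^2)$, which via the structure theorem already places $DE_4,DE_6$ in $\CC[E_2,E_4,E_6]$ in the right weights; one then matches Fourier coefficients as before. The advantage of carrying along the depth bound is simply that it keeps the linear algebra at the level of $2\times2$ systems.
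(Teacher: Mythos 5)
Your proposal is correct and follows essentially the same route as the paper: use the closure of quasi-modular forms under \(D\) together with the structure theorem \(\cQM_*(PSL_2\ZZ) \cong \CC[E_2,E_4,E_6]\) to place each derivative in a small explicit space of quasi-modular forms, then identify it by matching the first Fourier coefficients (the paper carries this out only for \(DE_2(\tau)\), checking the coefficient \(-24q\) on both sides). Your additions---a proof of the differentiation lemma with the depth bound, which keeps each comparison to a \(2\times 2\) system, and the explicit treatment of all three identities---are refinements of, not departures from, the paper's argument.
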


We will provide a proof of this in just a moment.

To generalize this, we first note that modular forms are {\em almost} closed under differentiation. That is, if we consider a modular form \(f(\tau)\) of weight \(k\), then it satisfies
\[
f(\gamma \tau) = (c\tau + d)^k f(\tau).
\]
If we differentiate both sides of this equation we obtain
\[
(c\tau + d)^{-2}f'(\gamma \tau) = (c \tau + d)^k f'(\tau) + kc(c \tau + d)^{k-1}f(\tau)
\]
or more clearly,
\[
f'(\gamma\tau) = (c\tau + d)^{k+2}f'(\tau) + k (c \tau + d)^{k+2} f(\tau)\frac{c}{c\tau + d}.
\]
It follows then that the derivative is a quasi-modular form of weight \(k+2\) (and depth one greater)!

More generally, we have the following.

\begin{theorem}
The operation of differentiation acts on quasi-modular forms via
\[
\frac{d}{d\tau} : \cQM_{k}(\Gamma) \hookrightarrow \cQM_{k+2}(\Gamma).
\]
Thus, the ring of quasi-modular forms is closed under differentiation.
\end{theorem}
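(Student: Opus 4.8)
The plan is to mimic, in full generality, the computation just carried out for a single modular form. Let $f(\tau)$ be quasi-modular of weight $k$ and depth at most $r$, with companions $f_0(\tau),\dots,f_r(\tau)$, so that
\[
(c\tau+d)^{-k}f(\gamma\tau)=\sum_{m=0}^{r}f_m(\tau)\Big(\frac{c}{c\tau+d}\Big)^{m}
\]
for every $\gamma=\left(\begin{smallmatrix}a&b\\c&d\end{smallmatrix}\right)\in\Gamma$. I would differentiate both sides of this identity with respect to $\tau$, using only the elementary facts
\[
\frac{d}{d\tau}(\gamma\tau)=\frac{1}{(c\tau+d)^{2}},\qquad\frac{d}{d\tau}\Big(\frac{c}{c\tau+d}\Big)=-\Big(\frac{c}{c\tau+d}\Big)^{2}.
\]

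Differentiating the left-hand side yields $(c\tau+d)^{-k-2}f'(\gamma\tau)$ together with an anomalous term $kc(c\tau+d)^{-k-1}f(\gamma\tau)$; the crucial move is to rewrite this as $k\big(\frac{c}{c\tau+d}\big)\cdot(c\tau+d)^{-k}f(\gamma\tau)$ and feed in the transformation law for $f$ itself, so that it becomes $k\sum_{m=0}^{r}f_m(\tau)\big(\frac{c}{c\tau+d}\big)^{m+1}$. Differentiating the right-hand side term by term gives $\sum_{m=0}^{r}f_m'(\tau)\big(\frac{c}{c\tau+d}\big)^{m}-\sum_{m=0}^{r}m\,f_m(\tau)\big(\frac{c}{c\tau+d}\big)^{m+1}$. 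Collecting the two displays and re-indexing the shifted sums, one finds
\[
(c\tau+d)^{-(k+2)}f'(\gamma\tau)=\sum_{m=0}^{r+1}\big(f_m'(\tau)+(k-m+1)f_{m-1}(\tau)\big)\Big(\frac{c}{c\tau+d}\Big)^{m},
\]
with the conventions $f_{-1}=f_{r+1}=0$. Each coefficient is holomorphic on $\HH$, so this exhibits $f'$ as a quasi-modular form of weight $k+2$ and depth at most $r+1$; hence $\frac{d}{d\tau}$ carries $\cQM_k(\Gamma)$ into $\cQM_{k+2}(\Gamma)$ and $\cQM_*(\Gamma)$ is closed under differentiation. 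If one also demands holomorphy at the cusps, this survives trivially: from $f=\sum_{n\ge 0}a_nq^n$ one gets $\frac{d}{d\tau}f=2\pi i\sum_{n\ge 1}na_nq^n$, still a power series in $q$.

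For the injectivity encoded by the symbol $\hookrightarrow$: if $\frac{df}{d\tau}=0$ then $f$ is a constant, and a nonzero constant is not quasi-modular of weight $k$ unless $k=0$ (compare the constant terms in the transformation law for some $\gamma$ with $c\neq 0$, which exists because a finite-index subgroup of $PSL_2\ZZ$ cannot lie inside the infinite-index stabilizer of $i\infty$; alternatively, use that the companion $f_m$ is modular of weight $k-2m$). Thus $\frac{d}{d\tau}$ is injective on each graded piece $\cQM_k(\Gamma)$ with $k>0$, while $\cQM_0(\Gamma)=\CC$ is precisely the kernel on the whole algebra.

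I do not anticipate any genuine obstacle: the argument is a direct extension of the calculation already displayed in the text. The only points demanding a little care are the bookkeeping of the depth shift $r\mapsto r+1$ and, above all, recognizing that the extra term produced by the product rule must be \emph{absorbed} by substituting $f$'s own transformation law back into it rather than merely bounded; one should also double-check that the re-indexed coefficients $f_m'+(k-m+1)f_{m-1}$ are again genuine holomorphic functions of $\tau$ alone.
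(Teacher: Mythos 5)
Your proposal is correct and follows essentially the same route as the paper: the paper differentiates the transformation law for a (depth-zero) modular form, observes the anomalous term $kc(c\tau+d)^{k-1}f(\tau)$ turns the derivative into a quasi-modular form of weight $k+2$, and asserts the general case with a ``more generally''; you carry out exactly that computation at arbitrary depth, absorbing the extra term via the transformation law of $f$ itself, and your re-indexed identity with coefficients $f_m'+(k-m+1)f_{m-1}$ checks out. The remarks on behaviour at the cusp and on injectivity are harmless additions beyond what the paper does.
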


We are now in a position to easily prove Proposition \ref{prop_derivatives_eisenstein} in a way that showcases the utility of modularity.

\begin{proof}
Let us prove the first of these equalities. Since we have that \(\cQM_*(PSL_2\ZZ) \cong \CC[G_2, G_4, G_6]\) as graded \(\CC\)-algebras, it follows that the dimension of the degree 4 piece is 2, spanned by \(E_2(\tau)^2\) and \(E_4(\tau)\). Since \(DE_2(\tau)\) is a weight 4 (by the above Theorem), we can simply compare the first two Fourier coefficients of both sides. We have on the left-hand side,
\[
D(1 - 24q + \mathcal{O}(q^2)) = -24q + \mathcal{O}(q^2)
\]
and on the right-hand side
\[
\frac{\big(1 - 24q + \mathcal{O}(q^2)\big)^2 - \big(1 + 240q + \mathcal{O}(q^2)\big)}{12} = \frac{-288q + \mathcal{O}(q^2)}{12} = -24q + \mathcal{O}(q^2)
\]
as claimed.
\end{proof}

\section{Applications of Modular forms}

To apply (quasi-)modular forms to the context of Mirror Symmetry, we need to use a slightly different definition of quasi-modular forms, which is to be found in \cite{quasimodular}.

\begin{definition}
Let \(\Gamma\) be a finite index subgroup of \(PSL_2\ZZ\) as in definition \ref{mod_def}. An \idx{almost-holomorphic modular form} of weight \(k\) is a function \(f : \HH \to \CC\) for the group \(\Gamma\) if it satisfies the transformation law
\[
f (\gamma \tau) = (c\tau + d)^kf(\tau)
\]
for all \(\gamma \in \Gamma\), if it is bounded at infinity, and if it is at most polynomial in \(Y^{-1} = (\im \tau)^{-1}\). That is, if we can write it as
\[
f(\tau) = \sum_{m=0}^r f_m(\tau)Y^{-m}
\]
for some \(r \in \NN\).
\end{definition}

\begin{definition}\label{qmod_B}
Let \(\Gamma\) be as before. Then we define a \idx{quasi-modular form} to be the constant part (with respect to \(Y^{-1}\)) of an almost-holomorphic modular form.
\end{definition}

\begin{example}
As \(E_2(\tau)\) was quasi-modular before, it would probably be best that it still be quasi-modular under this alternate definition. Indeed, one can show that
\[
E_2^*(\tau, \bar \tau) = E_2(\tau) - \frac{6i}{\im \tau}
\]
satisfies the modular transformation law (Exercise \ref{alt_quasimod_e2}). Thus its constant term, \(E_2(\tau)\), is a quasi-modular form by this definition.
\end{example}

\begin{remark}
It is not too hard to show that these two definitions of quasi-modularity are equivalent. The gist of it is that in each case, the only new functions that we introduce are powers of the Eisenstein series \(E_2(\tau)\), which will be shown in the exercises.
\end{remark}

\begin{remark}
Given an almost-holomorphic modular form \(f(\tau, \bar \tau)\), we can equally obtain the associated quasi-modular form by the limit
\[
f(\tau) = \lim_{\bar \tau \to \infty} f(\tau, \bar \tau)
\]
which is a perspective that will be relevant shortly.
\end{remark}

Our main application of interest is that (quasi-)modular forms arise naturally in studying the enumerative geometry of Calabi-Yau manifolds through the use of Mirror Symmetry. Let us expand on this idea.

\begin{remark}
From here onwards, for simplicity of notation we will always mean quasi-modular whenever we write modular, as they are the objects that arise naturally in Mirror Symmetry.
\end{remark}

\begin{definition}
A \idx{Calabi-Yau} manifold is a K\"ahler manifold \(X\) such that \(K_X \cong \cO_X\).
\end{definition}

The main example that we will consider for the time being is a complex torus, or an elliptic curve; note that as the tangent bundle of an elliptic curve is trivial, we also have that \(K_E \cong \cO_E\).

\begin{remark}
In different contexts, there may be further requirements for a manifold to be Calabi-Yau. For example, some authors require
\begin{itemize}
\item \(\pi_1(X)\) is trivial.
\item \(h^{1,0}(X) = 0\).
\item \(h^{p,0}(X) = 0\) for \(0 < p < \dim X\).
\end{itemize}
Note that for a Calabi-Yau threefold, the first condition implies the second, which is equivalent to the third.
\end{remark}

Given a Calabi-Yau manifold (read: elliptic curve), there are two types of moduli which we naturally associate with this manifold. There is the moduli of inequivalent complex structures, which we denote \(\cM_\CC^X\), and the (complexified) K\"ahler moduli, denoted \(\cM_K^X\).

For the case of an elliptic curve, we have the classical fact that \(\cM_\CC^E \cong \HH/PSL_2\ZZ\), while the K\"ahler moduli space is given by
\[
\cM_K^E = \{ \omega \in H^2(X, \CC) \mid \im \omega \text{ is a K\"ahler class}\}/H^2(X, \ZZ)
\]
which in our case is \(\HH/\ZZ\) with multivalued \(\HH\)-coordinate given by
\[
t = \int_E \omega.
\]
We should note that a more natural choice of coordinate on this moduli space will be \(q = e^{2\pi i t}\), due to the multivalued nature of this coordinate.

One of the main claims of Mirror Symmetry is that, given a Calabi-Yau manifold \(X\), there is a second Calabi-Yau manifold \(\hat{X}\), the {\em mirror manifold} of \(X\) together with local isomorphisms---the so-called ``mirror maps''---centered around certain special points of moduli
\begin{gather*}
\phi_X : \cM_K^X \to \cM_\CC^{\hat{X}} \\
\phi_{\hat{X}} : \cM_K^{\hat{X}} \to \cM_\CC^X 
\end{gather*}
such that certain functions defined on one space can be computed on the other space.

\begin{remark}
For an elliptic curve, the mirror must be another elliptic curve, since that is the only Calabi-Yau manifold of dimension 1. Moreover, the mirror maps can be shown to be the very simple
\[
\phi_E : t \mapsto \tau.
\]
\end{remark}

\begin{remark}
For a mirror pair \((X, \hat X)\) of Calabi-Yau threefolds, it follows that we have the following equality of Hodge numbers.
\[
h^{2,1}(X) = h^{1,1}(\hat{X}) \qquad \qquad h^{2,1}(\hat{X}) = h^{1,1}(X)
\]
since \(\dim\cM_\CC^X = h^1(X, T_X) = h^{3-1,1}(X)\) and \(\dim \cM_K^X = h^1(X, \Omega_X)\).
\end{remark}

The important part of Mirror Symmetry is understanding the statement that ``certain functions can be computed on the other space''. Let us focus on the elliptic curve and see what this means.

We begin by recalling the following important function.

\begin{definition}
We define the Weierstrass \(\wp\)-function to be
\[
\wp(z,\tau) = \frac{1}{z^2} + \sum_{(m,n) \neq (0,0)} \bigg(\frac{1}{\big(z - (m + n\tau)\big)^2} - \frac{1}{(m+n\tau)^2}\bigg).
\]
\end{definition}

From string-theoretic considerations, we consider the function of the complex modulus \(\tau\) i.e. on \(\cM_\CC^E\) (for \(g \geq 2\)) given by
\[
F_g^B(\tau) = \sum_{\Gamma}\frac{I_\Gamma}{|\Aut\Gamma|}
\]
where we sum over trivalent, genus \(g\) graphs (i.e. such that \(h^1(\Gamma) = g\)). These graphs have \(2g-2\) vertices \(v_i\) and \(3g-3\) edges \(e_i\). Moreover, if we define the function \(P(z,\tau)\) to be\footnote{Note that we are using the non-holomorphic extension of \(E_2(\tau)\) so that this is well-defined on \(\cM_\CC^E\)}
\[
P(z,\tau) = \frac{1}{4\pi^2}\wp(z,\tau) + \frac{1}{12}E_2^*(\tau)
\]
(for \(z \neq 0\), at least), then we define the weighting \(I_\Gamma\) to be
\[
I_\Gamma = \idotsint \prod_e \big(-P(z_{e_{v_1}} - z_{e_{v_2}},\tau)\big) \prod_v dz_v 
\]
where 
\begin{itemize}
\item we take the product over the \(3g - 3\) edges \(e\), whose end vertices are \(e_{v_1}\) and \(e_{v_2}\),
\item we have a variable \(z_v\) associated to each vertex \(v\),
\item we integrate along \(2g-2\) non-intersecting loops \(z_v \to z_v + 1\) in \(E\).
\end{itemize}

\begin{example}
Let us consider the graph
\[
\xymatrix{
\\
*{\bullet} \ar@{-}[r]\ar@/^2em/@{-}[r]\ar@/_2em/@{-}[r] & *{\bullet} \\ \\
}
\]
(the so-called \(\theta\)-graph) which is a genus 2 graph, with two vertices and three edges. It follows that our function of interest is
\[
I_\Gamma =  \iint \big(-P(z_1 - z_2,\tau)\big)^3dz_1dz_2
\]
which can be computed to be (see \cite{roth_yui_B}) to be 
\[
\frac{1}{2^8 3^4 5}\big(10E_2^*(\tau)^3 - 6E_2^*(\tau)E_4(\tau) - 4E_6(\tau)\big)
\]
which is clearly quasi-modular in the limit \(\bar \tau \to \infty\). More generally, these functions will be all quasi-modular due to the Laurent expansion of the Weierstrass \(\wp\)-function, which gives us that
\[
P(z,\tau) = -\frac{1}{(2\pi i z)^2} - \sum_{n =1}^\infty \zeta(1-2n) \frac{E_{2n}(\tau)}{(2n-2)!}(2\pi i z)^{2k-2}.
\]
\end{example}

\begin{example}
We can further show that the only relevant genus 3 graphs are the following:
\[
\xymatrix{
\\
\Gamma_1 = & *{\bullet} \ar@{-}[r]\ar@{-}[d]\ar@/^2em/@{-}[r] & *{\bullet} \ar@{-}[d]\\
&*{\bullet} \ar@{-}[r]\ar@/_2em/@{-}[r] & *{\bullet} \\ \\
}
\]
and
\[
\xymatrix{
& & *{\bullet} \ar@{-}[d] \\
\Gamma_2 = & & *{\bullet} \\
& *{\bullet} \ar@{-}[ur] \ar@/^2.1em/@{-}[uur] \ar@/_1.4em/@{-}[rr] & & *{\bullet} \ar@{-}[ul]\ar@/_2.1em/@{-}[uul] \\ \\
}
\]
which produce the quasimodular forms
\[
I_{\Gamma_1} = \frac{1}{2^7 3^6}(4E_6^2 + 4E_4^3 - 12E_2E_4E_6 - 3E_2^2E_4^2 + 4E_2^3E_6 + 6E_4^2E_4 - 3E_2^6)
\]
and
\[
I_{\Gamma_2} = \frac{1}{2^8 3^4}(E_4-E_2^2)^3
\]
respectively.
\end{example}

On the K\"ahler (A-model) side, we have the following generating function, which contains enumerative information about the elliptic curve.

\begin{definition}
We begin by defining \(N_{d,g}\) to be the number of degree \(d\), genus \(g\) covers of an elliptic curve which are simply ramified at \(2g- 2\) points, weighted by automorphisms of the cover. For more information on this, see \cite{trop_hurwitz, roth_yui_A}.

Now, for \(g \geq 2\), we define the genus \(g\), A-model generating function to be
\[
F_g^A(t) = \sum_{d=1}^\infty N_{d,g} q^d
\]
where \(q = e^{2\pi i t}\). Note that this is a function of the K\"ahler modulus \(t = \int_E\omega\).
\end{definition}

\begin{theorem}[Mirror Symmetry for the Elliptic curve, see \cite{dijk, trop_elliptic}]
The generating functions \(F_g^A(t)\) can be computed as
\[
F_g^A(t) = \lim_{\bar t \to \infty} F_g^B(t,\bar t)
\]
\end{theorem}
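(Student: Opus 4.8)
The plan is to prove the identity by recognizing both sides as the connected genus-$g$ free energy of one and the same two-dimensional free-field theory on the elliptic curve $E$, computed in two different ways: the A-model side through the operator (trace) formalism, and the B-model side through its Feynman-diagram (Wick) expansion, with the limit $\bar t \to \infty$ implementing the passage from the non-holomorphic propagator to the holomorphic one, i.e. replacing $E_2^*(\tau)$ by $E_2(\tau)$ inside $P(z,\tau)$. A useful organizing remark is that, once both sides are known to be quasi-modular of the same weight $6g-6$, they lie in a finite-dimensional space, so in the last resort it would suffice to match finitely many Fourier coefficients; the argument sketched here is meant to be structural rather than such a finite check.

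First I would handle the A-model side. A degree-$d$, genus-$g$ cover of $E$ simply branched over $2g-2$ fixed points corresponds, via monodromy, to a tuple $(\alpha_1,\beta_1,\dots,\alpha_g,\beta_g;\rho_1,\dots,\rho_{2g-2})$ in $S_d$ with $\prod_i[\alpha_i,\beta_i]\prod_j\rho_j = 1$ and each $\rho_j$ a transposition, counted up to simultaneous conjugation, which is exactly the automorphism-weighted count. Burnside's character formula then expresses the disconnected count as $\sum_{\lambda \vdash d} \mathbf{f}_2(\lambda)^{2g-2}$, where $\mathbf{f}_2(\lambda)$ is the central character of a transposition in the irreducible $S_d$-module indexed by $\lambda$ (the exponent $2-2h$ attached to the genus of the base vanishes here since $h=1$); passing to connected covers by the standard exponential/logarithm relation exhibits $F_g^A(t)=\sum_d N_{d,g}q^d$ as a polynomial in $q$-brackets $\langle \mathbf{f}_2^{\,m}\rangle$. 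Since $\mathbf{f}_2$ is a shifted-symmetric function of $\lambda$ of weight $3$, the Bloch--Okounkov and Kaneko--Zagier theorems show these $q$-brackets are quasi-modular of weight $6g-6$; equivalently, $F_g^A$ is the genus-$g$ generating function of the stationary Gromov--Witten theory of $E$ with $2g-2$ point insertions. This is where I would invoke \cite{dijk} and \cite{roth_yui_A}.

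Next I would treat the B-model side and carry out the matching. The key input is the Fourier expansion of the propagator: from the Laurent expansion of $P(z,\tau)$ quoted above, in the holomorphic limit $P$ has a $q$-expansion whose coefficients are precisely the edge weights of the free boson on the torus. Expanding each factor $-P(z_{e_{v_1}}-z_{e_{v_2}},\tau)$ of $I_\Gamma$ in this Fourier series and carrying out the integrations over the non-intersecting cycles $z_v\mapsto z_v+1$ forces, at every vertex, the integer labels on the three incident edges to sum to zero; what survives is a lattice sum over admissible $\ZZ$-labelings of the trivalent graph $\Gamma$. That sum is exactly the contribution of the connected covers whose associated tropical (dual) graph is $\Gamma$ to $\sum_d N_{d,g}q^d$, by the tropical correspondence of \cite{trop_elliptic, trop_hurwitz}; summing over the finitely many trivalent genus-$g$ graphs $\Gamma$ weighted by $1/|\Aut\Gamma|$ then reassembles all connected covers and produces $F_g^A(t)$. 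The role of the limit $\bar t\to\infty$ is precisely to select the holomorphic propagator so that the output of this graph sum is the honest quasi-modular form, rather than its almost-holomorphic completion.

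The hard part will be the combinatorial bookkeeping in this last step: showing that the cycle integrations collapse the multi-dimensional lattice sum to the correct divisor/Hurwitz sum, and that the $|\Aut\Gamma|^{-1}$-weighted sum over trivalent graphs reproduces the automorphism-weighted count of connected covers on the nose, with all symmetry factors and the connectedness constraint matching. A cleaner but more indirect route, which I would fall back on if the direct matching becomes unwieldy, is to show that $F_g^B(t,\bar t)$ satisfies the BCOV holomorphic anomaly equation (a genus recursion obtained by differentiating in $\bar t$, using that $\partial_{\bar t}E_2^*$ is proportional to $(\im\tau)^{-2}$) and that the generating functions $F_g^A$ satisfy the matching cut-and-join recursion, the two recursions sharing the same initial data at the cusp $q\to 0$; induction on $g$, with the base cases $g=2,3$ checked by hand as in the examples above, then closes the argument. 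Either way, the real obstacle is organizing the equality of these two a priori very different combinatorial expansions.
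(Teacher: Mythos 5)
You should first note that the paper itself offers no proof of this theorem: it is quoted as a known result, with the actual arguments delegated to \cite{dijk} and \cite{trop_elliptic} (and the explicit graph computations to \cite{roth_yui_B}). So there is no in-paper argument to compare against line by line; the relevant comparison is with the cited literature, and your outline does track it faithfully. The A-model reduction via monodromy, Burnside's formula with the dimension factor dropping out because the base has genus one, the expression of \(F_g^A\) through \(q\)-brackets of the central character \(\mathbf{f}_2\), and the resulting quasi-modularity of weight \(6g-6\) via Bloch--Okounkov/Kaneko--Zagier are all correct and are exactly the Hurwitz-theoretic half of Dijkgraaf's picture; the B-model half, expanding each propagator \(P\) in its Fourier series and letting the cycle integrations impose momentum conservation at trivalent vertices, with \(\bar t \to \infty\) selecting the holomorphic propagator \(E_2\) in place of \(E_2^*\), is likewise the correct mechanism, and is the starting point of both the free-field and the tropical proofs.

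The genuine gap is the one you flag yourself: the identification of the resulting lattice sum over \(\ZZ\)-labelings of each trivalent graph \(\Gamma\), weighted by \(1/|\Aut\Gamma|\), with the automorphism-weighted count of connected simply branched covers is not a bookkeeping afterthought --- it \emph{is} the theorem. Everything before that point is a reformulation of the two sides separately; the equality of the two expansions (including the matching of symmetry factors, the connectedness constraint, and the fact that the degenerate terms of the propagator expansion do not contribute after integration) is precisely what \cite{dijk} and, in the tropical formulation, \cite{trop_elliptic} establish, and your proposal defers it. The fallback route through a holomorphic anomaly equation for \(F_g^B\) matched against a cut-and-join recursion for \(F_g^A\) is also only named, not carried out, and setting up either recursion with matching initial data is comparable in difficulty to the direct matching. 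So as written this is a correct and well-informed roadmap to the known proofs rather than a proof: to complete it you would need to either execute the vertex-by-vertex correspondence between graph labelings and branched covers (the tropical correspondence theorem), or actually derive and solve the two recursions, with the base cases \(g=2,3\) checked against the explicit forms \(I_\Gamma\) recorded in the paper.
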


The numbers \(N_{d,g}\) are in general rather difficult to compute, and this theorem tells us that we can compute them by computing certain integrals on the mirror elliptic curve, which is rather surprising. Moreover, it tells us that the generating functions \(F_g^A(t)\) are quasi-modular forms, which is by no means obvious from the definition.

What happens in the general case? The limit \(\bar \tau \to \infty\) or \(\bar t \to \infty\) correspond to the so-called {\em large complex structure limit} or {\em large volume limit} in the moduli spaces \(\cM_\CC^E\) and \(\cM_K^E\), respectively. These are the ``special points of moduli'' discussed earlier. In particular, what we expect for a general Calabi-Yau \(X\) with mirror \(\hat X\) is the same general picture. Specifically, we should have the following:

The function \(F_g^{\hat X, B}(\tau)\) is a (non-holomorphic) section of a line bundle \(L^{2g-2}\) on the moduli space \(\cM_\CC^{\hat X}\), which is to be thought of as a sort of generalized modular form. Mirror Symmetry now claims that we have a local isomorphism \(\phi_X\) around the large volume/large complex structure limit points so that the function \(F_g^{X,A}(t)\) can be computed as
\[
F_g^{X,A}(t) = \lim_{\bar t \to \infty} F_g^{\hat X, B}\big(\phi_X(t)\big)
\]
i.e. that \(F_g^{X, A}(t)\) is a quasi-modular form, according to the above definition.

\section{Further Examples}

Such modular forms arise in higher-dimensional settings as well, and in particular they arise when counting curves on surfaces. In such a case, there is a general framework in \cite{gottsche} for enumerating curves in a fixed linear systems subject to some point constraints (i.e. if the linear system is \(d\)-dimensional, then we demand that our curves pass through \(d\) generic points, which will (morally) reduce it to a finite count). In the special case that \(K_S \cong \cO_S\)---that is, K3 or Abelian surfaces---then we end up with modular generating functions.

For the purposes of this section, we consider a {\em third} normalization of the Eisenstein series, given by
\[
\widehat{E}_{2k}(\tau) = \frac{(-1)^k(2k-1)!}{2(2\pi)^{2k}}G_{2k}(\tau)
\]
which for \(\widehat{E}_2(\tau)\) reads (and similarly for higher values of \(k\))
\[
\widehat{E}_2(\tau) = -\frac{1}{24} + \sum_{d=1}^\infty \sigma_1(d)q^d,
\]
and where, as usual, \(q = e^{2\pi i \tau}\). With this, we have the following theorems (see \cite{yz, bryan_leung_k3, bryan_leung_ab}).

\begin{theorem}
Let \(N_{d,g}^{K3}\) denote the number of degree \(d\), genus \(g\) curves in a fixed linear system in a generic algebraic K3 surface. Then we have
\[
F_g^{K3}(q) = \sum_{d=0}^\infty N_{d,g}^{K3} q^{d-1} = \frac{\big(D\widehat{E}_2(\tau)\big)^g}{\Delta(\tau)}.
\]
\end{theorem}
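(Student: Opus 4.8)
\emph{The plan} is to follow the degeneration argument of Bryan and Leung, reducing the count to a combinatorial problem on a special elliptically fibered K3 surface. The first step is deformation invariance: the numbers \(N_{d,g}^{K3}\) depend only on \(d\) (which records the self-intersection \(C^2 = 2d-2\) of the primitive class \(C\) being counted) and \(g\), and not on the particular generic K3, because the moduli of pairs \((S,C)\) with \(C\) primitive of fixed square is irreducible. Granting this, I would replace the generic K3 by an elliptic fibration \(\pi : S \to \PP^1\) with a section \(B\) and — since smooth fibers contribute \(0\) and irreducible nodal fibers contribute \(1\) to \(e(S) = 24\) — with exactly \(24\) nodal (type \(I_1\)) fibers \(F_1,\dots,F_{24}\). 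The relevant class is \(C = B + dF\) with \(F\) the fiber class: one checks \(C^2 = 2d-2\), \(\dim|C| = d\), and that a generic member is a smooth curve of genus \(d\), so for \(g \le d\) we are counting geometric-genus-\(g\) curves in \(|C|\) through \(g\) generic points.

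Next I would choose the \(g\) point constraints to lie on \(g\) generic fibers \(F_{x_1},\dots,F_{x_g}\) of \(\pi\). Since \(C\cdot F = 1\), every curve in \(|C|\) meets each fiber in a single point counted with multiplicity, so forcing a genus-\(g\) curve through a generic point of \(F_{x_i}\) forces \(F_{x_i}\) to occur as a component. Together with the constraints that the total class be \(B+dF\) (which forces \(B\) to appear with multiplicity exactly one, as \(dF-B\) is not effective) and that the geometric genus be exactly \(g\), this pins the contributing curves down to explicit fiber-supported configurations: the section \(B\) taken once, plus the \(g\) marked fibers with multiplicities, plus the \(24\) nodal fibers with multiplicities, the fiber multiplicities summing to \(d\) and the genus dropping correctly (a reduced union of \(B\) with \(m\) smooth fibers has arithmetic genus \(m\), whereas a nodal fiber has geometric genus \(0\), which is what controls the genus).

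The third step is the local analysis, and here the generating function factors over the contributing fibers. For each nodal fiber one studies the versal deformation of a multiple cover of that fiber inside \(S\) and counts the resulting curves with their scheme-theoretic multiplicity; this local contribution works out to \(\prod_{n\geq 1}(1-q^n)^{-1}\). For each of the \(g\) marked generic fibers the analogous local count works out to \(D\widehat{E}_2(\tau)\). Multiplying these, together with the base contribution \(q^{-1}\) (the \(d=0\) term, a single \((-2)\)-curve), gives
\[
F_g^{K3}(q) = q^{-1}\,\bigl(D\widehat{E}_2(\tau)\bigr)^g \prod_{n\geq 1}(1-q^n)^{-24} = \frac{\bigl(D\widehat{E}_2(\tau)\bigr)^g}{\Delta(\tau)},
\]
using \(\Delta(\tau) = q\prod_{n\geq 1}(1-q^n)^{24}\).

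\emph{The main obstacles} are twofold. First, rigorously establishing deformation invariance and the reduction to fiber-supported curves is delicate: because \(K_S \cong \cO_S\), the ordinary Gromov--Witten invariants in the class \(C\) vanish, so one cannot simply quote GW deformation invariance; one must either pass to a reduced virtual class or argue directly that the purely enumerative count is constant over the family of elliptic K3s and that point constraints on generic fibers force the degeneration. Second, and where essentially all the real work lies, is the local multiplicity computation at the nodal fibers and at the marked fibers — identifying the correct obstruction theory and the scheme structure on the relevant moduli of curves near these degenerate configurations, and extracting precisely the factors \(\prod_{n\geq1}(1-q^n)^{-1}\) and \(D\widehat{E}_2(\tau)\).
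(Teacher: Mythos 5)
The paper states this theorem without proof, citing Yau--Zaslow and Bryan--Leung, and your outline is exactly the Bryan--Leung degeneration argument that those references carry out: pass to an elliptic K3 with a section and 24 nodal fibers, show the constrained curves are the section plus fiber-supported components, and multiply local contributions, each nodal fiber giving \(\prod_{n\geq 1}(1-q^n)^{-1}\) and each marked fiber giving \(\sum_{a\geq 1} a\,\sigma_1(a)q^a = D\widehat{E}_2(\tau)\). Your sketch is correct as far as it goes, and the two obstacles you flag are precisely where the cited proof does its real work: deformation invariance is handled there by defining invariants for the twistor family (since, as you note, ordinary Gromov--Witten invariants of a K3 vanish), and the factors \(\prod_{n\geq1}(1-q^n)^{-1}\) and \(D\widehat{E}_2(\tau)\) come from explicit local multiple-cover and multiplicity computations at the nodal and marked fibers.
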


\begin{theorem}
Let \(N_{d,g}^A\) denote the number of degree \(d\), genus \(g\) curves in a fixed linear system in a generic algebraic Abelian surface. Then we have
\[
F_g^A(q) = \sum_{d=0}^\infty N_{d,g}^A q^{d+g-1} = \big(D\widehat{E}_2(\tau)\big)^{g-2}D^2\widehat{E}_2(\tau).
\]
\end{theorem}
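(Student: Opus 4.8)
\emph{Strategy.} The plan is to imitate, for the abelian surface, the degeneration argument by which Bryan--Leung obtained the K3 formula in the previous theorem (cf. \cite{bryan_leung_k3}). First I would reinterpret \(N_{d,g}^A\) as the genus-\(g\) curve count in a fixed linear system on a polarized abelian surface \((A,L)\) of the appropriate type: adjunction forces a reduced, geometric-genus-\(g\) member of the system to have self-intersection \(2(d+g-1)\) for the relevant discrete invariant \(d\), so that the weighting \(q^{\beta^2/2}\) over the admissible classes \(\beta\) reproduces precisely \(F_g^A(q)=\sum_d N^A_{d,g}\,q^{d+g-1}\). Since this count is invariant under deformations of \((A,L)\) preserving the polarization type, I am free to degenerate to a convenient special surface.

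\emph{The geometric input.} The key is that a generic abelian surface degenerates to one carrying an elliptic fibration \(\pi\colon A_0\to E'\) over an elliptic curve (for instance a product of elliptic curves, or a controlled degeneration thereof), with the polarization class a combination of the fibre class and a multisection. Crucially, and in contrast with the elliptic K3, this fibration has \emph{no singular fibres} --- an abelian surface has topological Euler characteristic \(0\). This is exactly why no factor of \(\Delta(\tau)=\eta(\tau)^{24}\) appears here: the \(1/\Delta\) in the K3 formula is the generating function recording the \(24\) nodal fibres of the elliptic K3, and in the abelian case there are none. On \(A_0\) a genus-\(g\) member of the linear system degenerates to a union of fibres together with a multisection of \(\pi\), and one enumerates such configurations combinatorially, bookkeeping how the \(q\)-degree splits among the fibre components and how the arithmetic genus is driven down to \(g\) by imposing nodes at prescribed points; each such independent node contributes the universal factor \(D\widehat{E}_2(\tau)=\sum_{n\ge1}n\,\sigma_1(n)\,q^n\), just as in the K3 case.

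\emph{Assembling the series.} The only further difference from the K3 computation is the \(2\)-dimensional translation group acting on the abelian surface, which has no analogue on a K3. The fixed-linear-system count must be taken modulo this action, which absorbs two of the moduli directions that survive in the K3 bookkeeping; this is what turns \((D\widehat{E}_2)^{g}\) into \((D\widehat{E}_2)^{g-2}\,D^2\widehat{E}_2\), two of the \(g\) summed factors being replaced by a single factor differentiated twice, since the two translation directions each enter one-dimensionally in \(H^1\) and so contribute a single second-order insertion. (The total derivative weight is still \(g\), and the resulting form has weight \(4g-2\).) One then verifies that the combinatorial series equals \((D\widehat{E}_2)^{g-2}D^2\widehat{E}_2\), either by a direct manipulation of the sum, or --- once one knows a priori from the shape of the sum that the left side is quasi-modular of weight \(4g-2\) --- by matching finitely many Fourier coefficients inside the finite-dimensional space \(\cQM_{4g-2}(PSL_2\ZZ)\) classified earlier.

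\emph{Main obstacle.} As with Bryan--Leung, the real work is making the degeneration rigorous: showing that every stable map in the limit is of the expected combinatorial type (no stray non-reduced or multiply-covered components, no curves lost into the boundary of the degeneration) and that each configuration is counted with the correct multiplicity, which requires a careful deformation-theoretic or relative Gromov--Witten analysis on \(A_0\). The secondary, abelian-specific difficulty is pinning down the translation normalization and the exact powers of \(q\); this is precisely the step that produces the shift \(g\mapsto g-2\) and the extra derivative, and it is the most error-prone part of the bookkeeping.
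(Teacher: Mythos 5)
The paper does not actually prove this statement: it is quoted as a known result with references (\cite{yz, bryan\_leung\_k3, bryan\_leung\_ab}), so the only fair comparison is with the Bryan--Leung argument you are trying to imitate, and your proposal reproduces its broad outline (specialize to an abelian surface fibred over an elliptic curve, note the absence of singular fibres and hence of the \(1/\Delta\) factor, and count comb-like configurations of a horizontal component plus fibre components). But as a proof it has two genuine gaps, both located exactly where the content of the theorem lies. First, the opening step ``the count is invariant under deformations of \((A,L)\), so I may degenerate'' is not available for free: the ordinary Gromov--Witten invariants of an abelian surface vanish identically (the translation action, equivalently the holomorphic symplectic form, kills them), so one must first define the right invariant --- a fixed-linear-system count, or a count up to translation, or a reduced invariant --- and then prove that \emph{that} quantity is deformation invariant and is computed by the special fibred surface. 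This is a substantive step in the cited work, not a remark, and your sketch assumes it.

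Second, the passage from \((D\widehat{E}_2)^g\) to \((D\widehat{E}_2)^{g-2}D^2\widehat{E}_2\) is asserted rather than derived. The sentence about the two translation directions ``each entering one-dimensionally in \(H^1\) and so contributing a single second-order insertion'' is numerology that matches the weight \(4g-2\), not an argument; in the actual computation the extra two derivatives come out of the explicit enumeration on the fibred surface (only \(g-2\) fibres are pinned by point constraints, and the remaining factor \(\sum_n n^2\sigma_1(n)q^n\) arises from counting the possible positions of the horizontal component within its class --- equivalently the \(n^2\) torsion translates --- together with its attachment data), and this bookkeeping, with multiplicities, is precisely what has to be proved. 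Your fallback of matching finitely many Fourier coefficients inside \(\cQM_{4g-2}(PSL_2\ZZ)\) does not rescue this, because it presupposes that the geometric generating function is quasi-modular of that weight, which is part of what the theorem asserts. So the proposal is a reasonable reading list for the Bryan--Leung proof, but the deformation-invariance input and the derivation of the \(D^2\widehat{E}_2\) factor are missing, and the stated mechanism for the latter would not survive being made precise.
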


There is another way to count curves in Abelian surfaces as well. Up to translation there is a \((g-2)\)-dimensional family of geometric genus \(g\) curves in a fixed curve class \(\beta\), which is exactly the codimension of the hyperelliptic locus in \(\overline{\cM}_g\). This suggests that there should be finitely many such curves in a generic Abelian surface, up to translation. Indeed, we find (see \cite{rose}) the following.

\begin{theorem}
Assume the crepant resolution conjecture (see \cite{crc}) for the resolution \(Km(A) \to [A/\pm1]\). Then the generating function for the number of hyperelliptic curves in \(A\) (with some discrete data) can be described explicitly in terms of quasi-modular forms for the group \(\Gamma_0(4)\).
\end{theorem}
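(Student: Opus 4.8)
The plan is to reduce the enumeration to a curve count on the Kummer K3 surface $Km(A)$, where the answer is governed by the Yau--Zaslow/Bryan--Leung formulas already recorded above, and then to read off the $\Gamma_0(4)$-structure from the combinatorics of the sixteen exceptional curves.

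\textbf{Geometric reduction.} A genus-$g$ curve $C\subset A$ is hyperelliptic exactly when, after a translation, it is symmetric under $x\mapsto -x$ with hyperelliptic involution $-1|_C$; its $2g+2$ Weierstrass points then lie among the sixteen $2$-torsion points $A[2]$, constrained by the curve class $\beta$. Passing to $[A/\pm 1]$ and resolving, such a $C$ yields a rational curve on $Km(A)$ in a class of the shape $\pi^*\bar\beta-\sum_i m_iE_i$, where the $E_i$ are the sixteen $(-2)$-curves and $m_i\in\{0,1\}$ records which $2$-torsion points are Weierstrass points; conversely a rational curve in such a class meeting the $E_i$ transversally pulls back to a symmetric genus-$g$ curve in $A$. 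This identifies the translation-quotiented hyperelliptic count, with its discrete data, with a weighted count of rational curves on $Km(A)$ organised by the exceptional multiplicities.

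\textbf{Orbifold invariants and the CRC.} One then phrases this at the level of virtual counts: the translation-invariant count of symmetric curves in $A$ is an orbifold Gromov--Witten/BPS invariant of the stack quotient $[A/\pm 1]$, with insertions in the twisted sectors at the sixteen stacky points. Since $Km(A)\to[A/\pm 1]$ is a crepant resolution, the crepant resolution conjecture of \cite{crc} for this pair matches these orbifold invariants with the corresponding invariants of $Km(A)$ under the standard change of variables relating the exceptional parameters; this is precisely where the hypothesis enters.

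\textbf{K3 input and assembly.} On $Km(A)$, for each fixed choice of exceptional multiplicities the relevant generating series in $q=e^{2\pi i\tau}$ is, by the genus-$g$ K3 formula stated above, namely $F_g^{K3}=(D\widehat{E}_2)^g/\Delta$, a weight-shifted level-one (quasi-)modular object. Summing over the admissible subsets of the sixteen $2$-torsion points weights this by a theta constant for the rank-one lattice $\langle 2\rangle$, i.e.\ by powers of Jacobi theta functions, while the parity condition coming from $\beta$ selects a single residue class. The outcome is a finite sum of products of $1/\Delta$, $D$-derivatives of $\widehat{E}_2$, and such theta constants; each factor is (quasi-)modular for $\Gamma_0(4)$ --- the theta constants because the underlying $2$-isogeny structure forces this level, and the Eisenstein and $\Delta$ factors a fortiori --- so the whole generating function lies in $\cQM_*(\Gamma_0(4))$, as claimed.

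\textbf{Main obstacle.} The genuine difficulty is the middle step: setting things up so that ``number of hyperelliptic curves up to translation'' really is a deformation-invariant count attached to $[A/\pm 1]$, pinning down the curve classes and the twisted insertions at the sixteen points, and verifying that the form of the crepant resolution conjecture in \cite{crc} applies in this quotient-by-translation, orbifold-K3 situation --- in particular that its change of variables produces exactly the $q^{1/2}$ responsible for level $4$ rather than level $2$ or level $1$. The bookkeeping in the last step, namely determining which subsets of $A[2]$ are admissible for a given $\beta$ and the precise normalising power of $q$, is routine by comparison with the K3 case but must be carried out carefully in order to land on $\Gamma_0(4)$.
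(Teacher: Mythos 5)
First, a caveat: the paper does not actually prove this theorem --- it is quoted, with the discrete data and the explicit formula suppressed, from \cite{rose} --- so your proposal can only be measured against the argument given there. Your skeleton does match that argument in outline: translate a hyperelliptic curve in \(A\) so that it is \(-1\)-invariant with its \(2g+2\) Weierstrass points among the sixteen points of \(A[2]\), interpret the translation-reduced count as a genus-zero orbifold Gromov--Witten invariant of \([A/\pm1]\) with \(2g+2\) twisted insertions, use the crepant resolution conjecture to transfer this to \(Km(A)\), and compute there with K3 curve-counting technology, with the level structure emerging from the combinatorics of the sixteen exceptional curves. That is the right plan, and your identification of the middle step as the place where the hypothesis enters is accurate.

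However, as a proof the proposal has genuine gaps. The K3 input you invoke is the wrong one: the quotient of a hyperelliptic curve by its involution is rational, so on the resolution side one needs genus-\emph{zero} invariants of \(Km(A)\) in classes with prescribed intersections with the sixteen \((-2)\)-curves (a Yau--Zaslow/Bryan--Leung-type computation adapted to the Kummer lattice), not the genus-\(g\) linear-system count \(\big(D\widehat{E}_2\big)^g/\Delta\) through \(g\) points. Relatedly, your assembled answer --- finite sums of products of \(1/\Delta\), derivatives of \(\widehat{E}_2\), and theta constants --- would not establish the statement even if the bookkeeping worked out, because \(1/\Delta\) is not a quasi-modular form for any group; in the actual argument the \(\Delta\)-type contributions do not survive, and the generating function is expressed through MacMahon-type (odd) sum-of-divisors series, whose quasi-modularity for \(\Gamma_0(4)\) is itself a theorem rather than a soft consequence of ``theta constants of \(\langle 2\rangle\) force level 4.'' Finally, the step you explicitly defer --- setting up a deformation-invariant, translation-reduced orbifold count (recall that ordinary Gromov--Witten invariants of abelian and K3 surfaces vanish, so reduced or quotiented invariants are needed), pinning down the twisted insertions and curve classes, and specifying the CRC change of variables that produces the level --- is precisely the substance of the proof, so what you have is a correct strategy sketch rather than a proof.
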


One final application of the fact that \(\dim \cM_k(\Gamma)\) is finite is the following. Let \(\Lambda\) be a rank \(r\) lattice, and consider a 1-parameter family of \(\Lambda\)-polarized K3 surfaces \(\pi : X \to C\). This yields a canonical map to the moduli space of \(\Lambda\)-polarized K3 surfaces,
\[
\iota_\pi : C \to \cM_\Lambda.
\]
Within this moduli space, there is a particular class of divisors, the Noether-Lefschetz divisors. These parameterize those K3 surfaces whose Picard rank jumps: That is, we have that
\[
D_{\vec{d},h} = \left\{S \in \cM_\Lambda \text{ such that } \left(\:
\begin{array}{*{13}{c}}
\cline{1-3}
\multicolumn{1}{|c}{ } & & \multicolumn{1}{c|}{ }& d_1 \\
\multicolumn{1}{|c}{ } & \Lambda & \multicolumn{1}{c|}{ }& \vdots \\
\multicolumn{1}{|c}{ } & & \multicolumn{1}{c|}{ }& d_r \\
\cline{1-3}
d_1 & \cdots & d_r & 2h - 2
\end{array}
\right) \hookrightarrow Pic(S)
\right\}
\]
(with the embedding required to be primitive) weighted by certain multiplicity data. For more detail, see \cite{kmps_nlyz}.

The intersection of the image \(\iota_\pi(C)\) with these divisors defines the \idx{Noether-Lefschetz numbers}. That is,
\[
NL_{\vec{d},h}^\pi = \int_{\iota_\pi(C)}D_{\vec{d},h} = \int_C \iota_\pi^*D_{\vec{d},h}.
\]
It follows then from \cite{bor_gkz} that these are the coefficients of a modular form of weight \(\frac{22-r}{2}\) for some group \(\Gamma\). That is, we can determine all of these intersection numbers by looking at finitely many of them, due to the finite dimensionality of \(\cM_k(\Gamma)\).

Another source of modularity comes from certain elliptically fibred Calabi-Yau threefolds. Let us consider the simplest case; for more details, see \cite{rose_yui}.

Let \(\FF_1\) be the first Hirzebruch surface; that is, if we define the \(n\)-th Hirzebruch surface as
\[
\FF_n = \PP(\cO \oplus \cO(n))
\]
then we are interested in \(\FF_1\). We should note that in this case, that there is an alternative description of this surface as the blowup of \(\PP^2\) at a point, which is a del Pezzo surface of degree 8. This can be seen via their toric descriptions:
\[
\xymatrix{
 & & & & & &\\
 & \ar[ul]\ar[u]\ar[r]\ar[d] & & \cong& & \ar[dl]\ar[u]\ar[ur]\ar[r] &\\
 & & & & & &
}
\]
In this case, we can construct a certain elliptically fibred threefold \(X\) over \(\FF_1\). Its cone of effective curves is generated by \(C, F, E\), where \(C, F\) are the section and fibre curves of \(\FF_1\), and \(E\) is the class of an elliptic fibre.

Choose now some \(\beta \in H_2(\FF_1)\), and consider the generating function
\[
F_\beta(q) = \sum_{n=1}^\infty N_{\beta + n E}^X q^n
\]
where \(N_{\beta + nE}^X\) is the Gromov-Witten invariant of \(X\) in the class \(\beta + nE\), and in this case we regard \(q\) as a formal variable.

We have then the following result.

\begin{theorem}
The generating functions \(F_C(q)\) and \(F_F(q)\) are given by
\begin{gather*}
F_C(q) = q^{1/2}\frac{E_4(q)}{\eta(q)^{12}}, \\
F_F(q) = -2\frac{E_{10}(q)}{\eta(q)^{24}}
\end{gather*}
and as such, if we let \(q = e^{2\pi i \tau}\), then they are meromorphic modular forms\footnote{There is of course the factor of \(q^{1/2}\) in the first term which does break modularity. However, we can easily include this into the definition of the function, and end up with a modular form as we desire.} of weight \(-2\) for the group \(SL_2\ZZ\).
\end{theorem}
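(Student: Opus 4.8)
The plan is to combine a structural modularity statement for the fibrewise Gromov-Witten theory of the elliptic threefold \(X \to \FF_1\) with a short finite computation that pins the two series down exactly. The organising idea is that a connected curve in a class \(\beta + nE\), with \(\beta\) pulled back from the base, maps under \(X \to \FF_1\) onto a curve of class \(\beta\), so that summing over \(n\) is a genuinely ``fibrewise'' operation; the content of the theorem is that this operation produces modular forms. It is worth noting in advance that \(q^{1/2}E_4/\eta^{12}\) is, up to specialisation, the elliptic genus of the six-dimensional \(E\)-string --- and its numerator \(E_4 = \theta_{E_8}\) is exactly the theta series met earlier --- so modularity is what one should expect here.

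First I would invoke, from \cite{rose_yui} and the holomorphic anomaly / modularity machinery for elliptically fibred Calabi-Yau threefolds on which it rests, that --- up to the overall power of \(q\) displayed in the statement --- \(F_\beta(q)\) extends to a meromorphic modular form for \(SL_2\ZZ\), holomorphic on \(\HH\), whose only pole is at the cusp \(q = 0\), and which carries the quadratic \(\eta\)-multiplier exactly when \(-K_{\FF_1}\cdot\beta\) is odd. Two accompanying facts are then needed: its weight is \(-2\) (which one reads off from the automorphy coming from the \(SL_2\ZZ\) action on the periods of the elliptic fibre of a Weierstrass model for \(X\); equivalently it is the standard weight of a genus-zero fibrewise free energy), and the order of its pole at \(q = 0\) is governed by the curves contracted by the fibration, which contribute precisely a factor \(\eta(\tau)^{-12(-K_{\FF_1}\cdot\beta)}\). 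Since \(-K_{\FF_1}\cdot C = 1\) and \(-K_{\FF_1}\cdot F = 2\), multiplying \(F_C\) by \(\eta^{12}\) and \(F_F\) by \(\eta^{24} = \Delta\) clears the pole; this is the source of the denominators in the claimed formulas.

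Granting this, the rest is a dimension count and a normalisation. From the structural result \(F_F\cdot\Delta\) is a holomorphic modular form of weight \(-2 + 12 = 10\) for \(PSL_2\ZZ\); since \(\cM_*(PSL_2\ZZ) \cong \CC[E_4,E_6]\) gives \(\dim\cM_{10}(PSL_2\ZZ) = 1\), spanned by \(E_{10} = E_4E_6\), we get \(F_F = c\,E_{10}/\Delta\) for a constant \(c\). Similarly \(\eta^{12}\) times \(F_C/q^{1/2}\) is, a priori, a holomorphic weight-\(4\) form for \(PSL_2\ZZ\) (the pole having been cleared), hence a multiple of \(E_4\), so \(F_C = c'\,q^{1/2}E_4/\eta^{12}\); the factor \(q^{1/2}\) is the harmless modularity-breaking term flagged in the footnote, and it is exactly what renders the underlying \(q\)-series integral. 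It remains only to evaluate one Gromov-Witten number in each family --- say \(N^X_{C+E}\) and \(N^X_{F+E}\) --- which I would do by torus localisation on a toric model of \(X\), or by the genus-zero mirror computation (solve the Picard-Fuchs system of the mirror and expand the mirror map), to find \(c' = 1\) and \(c = -2\). One then records that \(q^{1/2}E_4/\eta^{12}\) and \(-2E_{10}/\eta^{24}\) have weights \(4 - 6 = -2\) and \(10 - 12 = -2\), which is the final assertion.

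The main obstacle is the structural input of the second paragraph. Once modularity, the weight, and the pole order are in hand, the argument collapses to the finite-dimensionality of the \(\cM_k(PSL_2\ZZ)\) already proved in these notes together with a bounded, mechanical curve count; but proving that the fibrewise generating function is modular in the first place --- rather than quoting it --- requires genuine information about the Weierstrass model of \(X\) and a careful analysis of the contracted fibre curves, and that is where essentially all of the difficulty lies.
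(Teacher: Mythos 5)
The first thing to say is that the paper itself offers no proof of this theorem: it is stated as an imported result, with a citation to \cite{rose_yui}, so there is no ``paper's approach'' to compare yours against. The relevant question is therefore whether your argument stands on its own, and it does not quite. Your reduction is sensible and the bookkeeping checks out: with $-K_{\FF_1}\cdot C = 1$ and $-K_{\FF_1}\cdot F = 2$ the denominators $\eta^{12}$ and $\eta^{24}$ are consistent, the weights $4-6 = 10-12 = -2$ match, and once you know that $\eta^{12}\,q^{-1/2}F_C$ and $\Delta\cdot F_F$ are holomorphic modular forms for $PSL_2\ZZ$ of weights $4$ and $10$, the one-dimensionality of $\cM_4(PSL_2\ZZ)$ and $\cM_{10}(PSL_2\ZZ)$ proved in these notes does pin each down up to a scalar. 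But the structural input of your second paragraph --- that $F_\beta$ is modular for $SL_2\ZZ$ of weight $-2$, holomorphic on $\HH$, with pole at the cusp controlled exactly by $\eta^{-12(-K_{\FF_1}\cdot\beta)}$ and with the half-integral shift when $-K_{\FF_1}\cdot\beta$ is odd --- is essentially the theorem itself, and you quote it (from the very reference being cited for the theorem) rather than prove it. As you concede in your last paragraph, that is where all the difficulty lies; an argument that defers precisely that step is a conditional reduction, not a proof, so there is a genuine gap here rather than a different route to the same result.

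Two smaller points to tighten if you pursue this. First, the normalisation step needs the \emph{leading} coefficient of each series, not an interior one: for $F_C$ the constant term (the count in the pure section class $C$), and for $F_F$ the coefficient of $q^{-1}$ in the claimed expansion $-2E_{10}/\eta^{24} = -2q^{-1} + \cdots$. This also exposes a convention mismatch you should address: as defined in the text, $F_\beta(q) = \sum_{n\geq 1} N^X_{\beta+nE}\,q^n$ has no constant or negative terms, so the stated formulas implicitly use a degree shift (and an $n=0$ term), and fixing the constants $c, c'$ requires saying exactly which convention is in force. Second, ``compute $N^X_{C+E}$ and $N^X_{F+E}$ by localisation or by solving the Picard--Fuchs system'' is itself a nontrivial computation that you have not carried out; in a complete write-up at least the one invariant per class that you actually use must be evaluated explicitly.
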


In fact, the same is true of the generating function \(F_{mF}(q)\), although the explicit formula is a little more complicated.

It seems then natural to conjecture the following (which agrees with conjectural formul\ae\ arising from string-theoretic considerations, see \cite{KMW,KMV,AS}).

\begin{conjecture}
The generating function \(F_\beta(q)\) are meromorphic modular forms of weight \(-2\) (for some congruence subgroup \(\Gamma\)) for all \(\beta \in H_2(\FF_1)\).
\end{conjecture}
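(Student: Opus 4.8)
The plan is to derive modularity of the individual generating functions \(F_\beta(q)\) from modularity of the total \(A\)-model partition function
\[
Z_X(q, Q) = \sum_{\beta \in H_2(\FF_1)} F_\beta(q)\, Q^\beta,
\]
where the formal variable \(Q^\beta\) records the base class \(\beta\) on \(\FF_1\) and \(q = e^{2\pi i \tau}\) tracks the elliptic fibre \(E\). The expectation, consistent with the string-theoretic predictions of \cite{KMW, KMV, AS}, is that \(Z_X\) is a meromorphic Jacobi form in \(\tau\) whose elliptic variables are dual to the base classes and whose poles lie only along the discriminant of the fibration, so that the denominator of each coefficient is controlled by a power of \(\eta(\tau)\). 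Granting this, every Fourier--Jacobi coefficient \(F_\beta\) is automatically a meromorphic modular form; that the weight is \(-2\) independently of \(\beta\) is then forced by a weight count, exactly as in the two computed cases \(F_C = q^{1/2}E_4/\eta^{12}\) and \(F_F = -2\, E_{10}/\eta^{24}\), where \(4 - 6 = 10 - 12 = -2\).

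To make this precise I would proceed in three steps. \textbf{Step 1 (reduction to the surface).} Reduce the threefold invariants \(N^X_{\beta+nE}\) to intersection data on \(\FF_1\): degenerate \(X\) along the preimage of a fibre of \(\FF_1 \to \PP^1\), or apply the Gromov--Witten product formula for elliptic fibrations, so as to write each \(N^X_{\beta + nE}\) as a sum over partitions of \(n\) of products of ``local'' fibre contributions --- these are precisely the contributions responsible for the \(\eta\)'s and \(E_{2k}\)'s already visible in \(F_C\) and \(F_F\) --- weighted by the classical geometry of \(\beta\) on \(\FF_1\). \textbf{Step 2 (the modular recursion).} Establish a holomorphic-anomaly (BCOV-type) recursion for the \(F_\beta\): letting \(F_\beta^*(\tau,\bar\tau)\) be the almost-holomorphic completion obtained by replacing every occurrence of \(E_2\) by \(E_2^*\), show that \(\partial_{\bar\tau}F_\beta^*\) is a universal quadratic expression in the \(F_{\beta'}^*\) with \(\beta'\) a proper sub-class of \(\beta\). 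This identifies \(F_\beta = \lim_{\bar\tau\to\infty}F_\beta^*\) as a quasi-modular form in the sense of Definition \ref{qmod_B}, and together with the pole bound from Step 1 upgrades this to meromorphic modularity. \textbf{Step 3 (bootstrap).} Take the proven formulas for \(F_C\), \(F_F\) and \(F_{mF}\) (see \cite{rose_yui}) as base cases and induct on \(a+b\), where \(\beta = aC + bF\). At each stage the recursion of Step 2, together with the closure of quasi-modular forms under products, under \(D = \frac{1}{2\pi i}\frac{d}{d\tau}\), and under division by \(\Delta\), places \(F_\beta\) in a definite finite-dimensional space of the form \(\frac{1}{\Delta^{N(\beta)}}\cM_{*}(\Gamma)\); the finitely many undetermined coefficients are then pinned down by computing a handful of low-degree Gromov--Witten invariants of \(X\) directly, e.g.\ by torus localisation on the toric model of \(X\).

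The main obstacle is Step 2: turning the holomorphic-anomaly recursion into a theorem rather than a physics input. In the genus-one elliptic-curve case discussed in Section 5 this is classical (Dijkgraaf, \cite{dijk}), but here one needs a degeneration formula for \(X \to \FF_1\) that is genuinely compatible with the modular parameter \(\tau\), and one must carry the non-holomorphic piece \(E_2^*\) through the entire argument --- in effect re-running the almost-holomorphic modular form formalism of Section 3 in a relative setting over the base \(\FF_1\). A secondary difficulty is level bookkeeping: the computed examples are modular for the full \(SL_2\ZZ\), but identifying the correct congruence subgroup \(\Gamma\) --- and hence the dimension of the space of candidate forms --- for a general base class \(\beta\) is delicate, and an incorrect guess would make the finite-dimensional matching in Step 3 collapse.
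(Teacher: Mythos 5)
This statement is stated in the paper as a \emph{conjecture}: the paper proves only the cases \(F_C\), \(F_F\) (and asserts \(F_{mF}\)) via \cite{rose_yui}, and offers no argument for general \(\beta\) beyond the remark that the claim agrees with string-theoretic predictions \cite{KMW,KMV,AS}. Your proposal is therefore not being measured against an existing proof, and, as written, it does not close the gap either: it is a research program whose load-bearing steps are precisely the unproven content of the conjecture. In Step 1 you assume that the total partition function \(Z_X(q,Q)=\sum_\beta F_\beta(q)Q^\beta\) is (essentially) a meromorphic Jacobi form with poles controlled by powers of \(\eta\); but this Jacobi-form/pole structure is exactly the string-theoretic prediction the conjecture is meant to encapsulate, so taking it as a hypothesis is circular. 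In Step 2 you yourself flag that the holomorphic-anomaly (modular anomaly) recursion for the \(F_\beta\) is a physics input rather than a theorem; no degeneration or product formula compatible with the modular parameter \(\tau\) is established here, and without it the identification of \(F_\beta\) as the constant part of an almost-holomorphic form in the sense of Definition \ref{qmod_B} does not follow. Step 3 then inherits both problems: the finite-dimensional matching requires knowing the congruence subgroup \(\Gamma\) and the pole order \(N(\beta)\) in \(\frac{1}{\Delta^{N(\beta)}}\cM_*(\Gamma)\), neither of which is pinned down, and fixing finitely many coefficients by localisation only works once the ambient finite-dimensional space is actually proven to contain \(F_\beta\).

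What your outline does capture correctly is the heuristic weight count (\(4-6=10-12=-2\)) and the general shape of the expected argument (anomaly recursion in the base class plus boundedness of poles by \(\Delta\)-powers), which is indeed how the physics literature arrives at the prediction. But to turn this into a proof you would need, at minimum, a mathematically established recursion in \(\beta\) (for instance via a rigorous degeneration formula for the elliptic fibration over \(\FF_1\)) together with an a priori bound on the pole order at the cusp; absent those, the proposal restates the conjecture rather than proving it.
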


\section{Exercises}

\begin{enumerate}
\item Show that a map \(f: E_1 \to E_2\) of elliptic curves is determined by a map on the underlying lattices.

\item Show that, up to similarity, a \(2 \times 2\) integer matrix of determinant \(d\) can be written as
\[
\begin{pmatrix}
m & r \\ 0 & n
\end{pmatrix}
\]
with \(mn = d\) and \(0 \leq r < m\).

\item Prove that the map
\[
\cM_k(PSL_2\ZZ) \to \cS_{k+12}(PSL_2\ZZ)
\]
induced by multiplication by \(\Delta(\tau)\) is an isomorphism.

\item\label{exer_alg_ind} Show that the modular forms \(E_4(\tau)\) and \(E_6(\tau)\) are algebraically independent.

\item\label{exer_log_deriv} Show that we can write \(E_2(\tau)\) as the logarithmic derivative of \(\Delta(\tau)\). That is, we have the equality
\[
E_2(\tau) = \frac{d}{d\tau}\log\Delta(\tau).
\]
Hint: It is helpful to remember that \(\Delta(\tau) = q\prod_{k=1}^\infty (1 - q^k)^{24}\)

\item\label{alt_quasimod_e2} Prove that the function
\[
E_2^*(\tau, \overline{\tau}) = E_2(\tau) - \frac{6 i}{\im \tau} = E_2(\tau) + \frac{12}{\tau - \overline{\tau}}
\]
transforms like a modular form of weight 2, although it is not holomorphic.

\item Let \(f(\tau)\) be a quasi-modular form for some group \(\Gamma\). That is, it transforms as
\[
(c\tau + d)^{-k} f(\gamma\tau) = \sum_{m=0}^r f_m(\tau)\Big(\frac{c}{c\tau + d}\Big)^m.
\]
Prove that each of the functions \(f_m(\tau)\) are themselves modular of weight \(k - 2m\).

\item Prove that a genus \(g\) trivalent graph has \(2g-2\) vertices and \(3g-3\) edges.

\end{enumerate}

\bibliographystyle{amsplain}
\bibliography{notes}

%\printindex

%TODO: Prove props 11, 12

\end{document}